\theoremstyle{remark}
\newtheorem{remark}{Remark}
\theoremstyle{plain}
\newtheorem{theorem}{Theorem}
\newtheorem{proposition}{Proposition}
\newtheorem{lemma}{Lemma}
\newcommand{\Id}{{\mathbb I\rm d}}
\newcommand{\R}{{\mathbb R}}
\newcommand{\Z}{{\mathbb Z}}
\newcommand{\unitcircle}{{\mathbb T}^1}
\newcommand{\Pdyn}{{\mathcal P}}
\newcommand{\eps}{{\varepsilon}}
\newcommand{\cf}{{\check f}}
\newcommand{\cg}{{\check g}}
\begin{document}

\title{A circle diffeomorphism with breaks that is smoothly linearizable}
\author{Alexey Teplinsky
\thanks{Institute of Mathematics, National Academy of Sciences of Ukraine, 3 Tereshchenkivska Street, Kiev, Ukraine 01601; e-mail:\ teplinsky@imath.kiev.ua, teplinsky.a@gmail.com}}
\maketitle
\begin{abstract}
In this paper we answer positively a question of whether it is possible for a circle diffeomorphism with breaks to be smoothly conjugate to a rigid rotation in the case when its breaks are lying on pairwise distinct trajectories. An example constructed is a piecewise linear circle homeomorphism that has four break points lying on distinct trajectories, and whose invariant measure is absolutely continuous w.~r.~t.\ the Lebesgue measure. The irrational rotation number for our example can be chosen a Roth number, but not of bounded type.
\end{abstract}

\section{Introduction}

Any orientation-preserving homeomorphism $T$ of the unit circle $\unitcircle=\R/\Z$ with an irrational rotation number $\rho=\rho(T)$ is uniquely ergodic~\cite{KH-book}, i.~e.\ it possesses a unique invariant probability measure $\mu=\mu(T)$. If $T$ is piecewise $C^1$, and the total variation of $\log T'$ is finite, then by the renown Denjoy's theorem it is topologically conjugate to the rigid (linear) rotation $R_\rho:\xi\mapsto\xi+\rho$, i.~e.\ there exists a homeomorphism $\phi$ (which is unique up to an additive constant) of $\unitcircle$ such that
\begin{equation}\label{eq:conjugacy}
\phi\circ T\circ\phi^{-1}=R_\rho.
\end{equation}
The invariant measure $\mu$ and the conjugacy (i.~e., the linearizing change of variables) $\phi$ are related by the equality $\phi(\xi_1)-\phi(\xi_0)=\int_{\xi_0}^{\xi_1}d\mu(\xi)$ for any arc $[\xi_0,\xi_1]\subset\unitcircle$. The question of how smooth this conjugacy $\phi$ is, depending on smoothness of $T$, has been studied extensively since 1960's.

The case when $T$ is a diffeomorphism, is covered by Herman's theory and its later developments~\cite{Herman,KO-etds1989,KT-invent2009}.

Quite different is the case when $T$ is a diffeomorphism with breaks, i.~e.\ it is piecewise $C^1$, but not $C^1$ (and we still assume that the total variation of $\log T'$ is finite). Let us call the `size of the break' the ratio of derivatives at the break point from the left and from the right. Since the first paper~\cite{DK-fa1998}, in which it was shown that the invariant measure of a circle diffeomorphism with a single break of non-unit size is singular (with respect to the Lebesgue measure $\ell$), there appeared a number of publications proving the singularity of the invariant measure in different cases. Recently, two groups of scientists have independently proved~\cite{DMS-iran2012,AM-etds2014} the following most general result: if the product of sizes of all breaks is non-unit, then the invariant measure is singular.

The case of unit product of sizes of breaks appeared to be difficult to investigate. Let us notice at this point that several breaks lying on the same trajectory are in a sense equivalent to one single break of a size that is the product of their sizes, therefore without lost of generality one may consider only diffeomorphisms with breaks that lie on pairwise distinct trajectories. It is shown in~\cite{Herman} (although not formulated as a statement) that a piecewise linear diffeomorphism with two breaks (the product of their sizes is unit automatically) has singular invariant measure unless the break points lie on the same trajectory. In~\cite{DL-2006,AM-etds2014}, it was proved that for a diffeomorphism with multiple breaks lying on distinct trajectories, the invariant measure is singular provided that rotation number is of bounded type (i.~e., its continued fraction expansion is a bounded sequence). Generally, there was a colloquial belief among specialists on the subject that the invariant measure is singular w.~r.~t. $\ell$ if at least for one trajectory the product of sizes of all breaks it contains is non-unit.

In this paper, we disprove that hypothesis presenting an example of piecewise linear homeomorphism of the unit circle that has four breaks lying on distinct trajectories, but nevertheless is absolutely continuously linearizable. Its rotation number can be chosen a Roth number (those are the numbers Diophantine with any positive Diophantine exponent), but not of bounded type. The linearizing conjugacy is {\em essentially} absolutely continuous in the sense that it is not piecewise $C^1$, possessing an everywhere dense set of points where it is not differentiable.

The example we construct is also interesting in a framework of rigidity theory for circle diffeomorphisms with breaks~\cite{KT-cmp2013}, which asks a more general question of whether given two such maps are smoothly conjugate. It was proved in~\cite{CS-II} that in the case of break equivalence of the two maps (a property assuming that a conjugacy sends each break point of the first map to a break point of the second one with the same size of break), unit product of sizes of breaks and bounded combinatorics (this notion generalizes bounded type of rotation number), that conjugacy is $C^1$. On the other hand, very recently it was shown~\cite{A-manuscript,DMS-arxiv} that if two circle diffeomorphisms with breaks with the same irrational rotation number have different products of sizes of breaks, then every conjugacy between them is a singular function. Our example suggests that in the case of the equal products, the condition of generalized break equivalence (i.~e., a break equivalence adjusted by a piecewise smooth change of coordinates) is not necessary for absolute continuity of the conjugacy.

In Section~2, we define our example for given sizes of breaks and rotation number and formulate the theorem about absolute continuity of its invariant measure w.~r.~t. $\ell$. In Section~3, we prove that theorem and make some additional comments.

The author is grateful to A.~Dzhalilov for useful discussions.

\section{Construction}

Let us fix an arbitrary number $a>1$
and take an arbitrary increasing sequence of positive integers
$k_1,k_2,\dots,k_n,\dots$, where we assume $k_1\ge2a$, $k_1\ge5$, and $\sum_{n=1}^{+\infty}k_n^{-1}<+\infty$.
The irrational number given by the continued fraction
$$
\rho=[k_1,k_2,\ldots,k_n,\ldots]=1/(k_1+1/(k_2+1/(\dots/(k_n+\dots)))),
$$
which is defined as a limit of the sequence of rational convergents $p_n/q_n=[k_1,k_2,\dots,k_n]=1/(k_1+1/(k_2+1/(\dots/k_n)))$, will be the rotation number of our circle homeomorphism. It is known that the mutually
prime positive integers $p_n$ and $q_n$ satisfy the recurrent relation
$$
p_n=k_np_{n-1}+p_{n-2},\quad q_n=k_nq_{n-1}+q_{n-2},\quad n\ge1,
$$
where it is convenient to define $p_0=0$, $q_0=1$ and $p_{-1}=1$,
$q_{-1}=0$. Accordingly, the decreasing sequence of quantities $\Delta_n=|q_n\rho-p_n|=(-1)^n(q_n\rho-p_n)$ satisfies the relation
$$
\Delta_{n}=k_{n+2}\Delta_{n+1}+\Delta_{n+2},\quad n\ge-1.
$$

Let us also fix an arbitrary number $a>1$.

\begin{lemma}\label{lemma:sequence}
There exists a sequence of positive numbers $d_n$, $n\ge-1$, with $d_{-1}=1$, which satisfy the following relations:
\begin{equation}
d_{2n}=k_{2n+2}d_{2n+1}+d_{2n+2},\quad d_{2n-1}=k_{2n+1}d_{2n}+d_{2n+1}+(a-1)\delta_{2n},\label{d_n}
\end{equation}
where
\begin{equation}
\delta_{2n}=d_{2n+2}+a\delta_{2n+4}=\sum_{s=0}^{+\infty}a^{s}d_{2n+4s+2}<+\infty;\label{delta_n}
\end{equation}
and a number $\alpha>1$ such that the following inequality holds:
\begin{equation}
0\le\log\frac{\alpha d_n}{\Delta_n}<\frac{2(a-1)}{k_{n+2}}.\label{eq:d_n/Delta_n}
\end{equation}
%Also, we have $\delta_{2n}+2\delta_{2n+2}<d_{2n}$.
\end{lemma}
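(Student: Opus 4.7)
I will construct $(d_n)$ as a limit of truncated sequences $d_n^{(N)}$ defined by backward recursion, and then analyze the ratios $r_n := d_n/\Delta_n$ via a convex-combination reformulation of~(\ref{d_n}).

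For each large $N$, set $d_n^{(N)} := \Delta_n$ for $n \geq 2N+1$ and extend backward using~(\ref{d_n}), with $\delta_{2n}^{(N)}$ given by the series~(\ref{delta_n}) (convergent because $k_n \to \infty$). Since the $(\Delta_n)$ satisfy the even recurrence by themselves, $d_{2N}^{(N)} = \Delta_{2N}$ automatically; the first genuine deviation is $d_{2N-1}^{(N)} = \Delta_{2N-1} + (a-1)\bar\delta_{2N}$, where $\bar\delta_{2n} := \sum_{s \geq 0} a^s \Delta_{2n+4s+2}$. All $d_n^{(N)}$ are positive. Using the identity $\Delta_n = k_{n+2}\Delta_{n+1} + \Delta_{n+2}$, the recurrences for $r_n^{(N)} := d_n^{(N)}/\Delta_n$ take the form
\begin{equation*}
r_{2n}^{(N)} = \lambda_{2n}\, r_{2n+1}^{(N)} + (1-\lambda_{2n})\, r_{2n+2}^{(N)}, \qquad r_{2n-1}^{(N)} = \lambda_{2n-1}\, r_{2n}^{(N)} + (1-\lambda_{2n-1})\, r_{2n+1}^{(N)} + (a-1)\,\delta_{2n}^{(N)}/\Delta_{2n-1},
\end{equation*}
with weights $\lambda_m := k_{m+2}\Delta_{m+1}/\Delta_m \in (0,1)$: the first is a convex combination, the second a convex combination plus a nonnegative forcing.

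Two monotonicity properties follow by induction: (i) for each $N$, the map $n \mapsto r_n^{(N)}$ is nonincreasing (equal to $1$ for $n \geq 2N+1$, strictly greater than $1$ for $n \leq 2N-1$); (ii) for each $n$, $N \mapsto r_n^{(N)}$ is nondecreasing. The tail estimate $\bar\delta_{2n}/\Delta_{2n-1} \leq C/(k_{2n+1}k_{2n+2})$, together with $\sum k_n^{-1} < +\infty$, yields a uniform bound $r_n^{(N)} \leq M$ independent of $N$, so by monotone convergence $d_n^{(N)} \nearrow \tilde d_n$, and the limit satisfies (\ref{d_n})--(\ref{delta_n}) (dominated convergence for $\delta$). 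Normalize $d_n := \tilde d_n/\tilde d_{-1}$ so that $d_{-1} = 1$. The nonincreasing, bounded sequence $r_n = d_n/\Delta_n$ converges, and I set $\alpha := 1/\lim_{n \to \infty} r_n$; then $\alpha > 1$ strictly because the forcing is positive at every odd index.

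The left inequality in~(\ref{eq:d_n/Delta_n}) is immediate from $r_n \geq 1/\alpha$. For the right inequality, set $t_n := \alpha r_n - 1 \geq 0$; then $t_n$ satisfies the same convex recurrences, now with a nonnegative source $f_{2n-1} := (a-1)\alpha\,\delta_{2n}/\Delta_{2n-1}$ at odd indices. Unrolling from $+\infty$ downward expresses $t_n$ as a weighted sum $\sum_{m \geq n,\, m \text{ odd}} G(n,m)\,f_m$ with Green-function coefficients $G(n,m) \in (0,1]$. A termwise comparison of this series against a telescoping estimate coming from the $\Delta$-recurrence --- split into the contribution of the smallest odd $m \geq n$ (producing roughly $(a-1)/k_{n+2}$) and the remaining tail (another $(a-1)/k_{n+2}$, by summability of $k_n^{-1}$) --- yields $t_n \leq 2(a-1)/k_{n+2}$, and hence $\log(\alpha d_n/\Delta_n) = \log(1+t_n) \leq t_n < 2(a-1)/k_{n+2}$. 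The main obstacle lies in this final estimate: producing exactly the constant $2(a-1)$ with denominator $k_{n+2}$ (rather than a weaker bound) requires careful bookkeeping of the two-part split, together with a self-consistency step for $f_m$ (whose values depend linearly on $\alpha$ and, via the tail of $t$, on the solution itself). I expect the bootstrap combining the uniform bound $M$ and the summability of $k_n^{-1}$ to be the most delicate point.
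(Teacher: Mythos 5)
There is a genuine gap: your monotonicity claim (i) is false, and it is load-bearing both for the definition of $\alpha$ and for the left inequality in (\ref{eq:d_n/Delta_n}). In your own notation, $r_{2N}^{(N)}=r_{2N+1}^{(N)}=1$ while $r_{2N-1}^{(N)}=1+c$ with $c=(a-1)\bar\delta_{2N}/\Delta_{2N-1}>0$, so the even-index relation gives $r_{2N-2}^{(N)}=\lambda_{2N-2}\,r_{2N-1}^{(N)}+(1-\lambda_{2N-2})\cdot1=1+\lambda_{2N-2}c<r_{2N-1}^{(N)}$, contradicting ``$n\mapsto r_n^{(N)}$ is nonincreasing.'' The failure is structural, not a matter of sharper estimates: since $r_{2n}^{(N)}$ is a \emph{strict} convex combination of $r_{2n+1}^{(N)}$ and $r_{2n+2}^{(N)}$, any nonincreasing solution of that relation would have to be constant, and your sequences are not. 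Consequently ``the nonincreasing, bounded sequence $r_n$ converges'' is unsupported, $\alpha:=1/\lim_n r_n$ is not yet well defined, and $r_n\ge1/\alpha$ does not follow. What does survive of (i) is the inequality $r_n^{(N)}\ge1$ (downward induction: convex combinations of numbers $\ge1$ plus a nonnegative forcing stay $\ge1$), and the natural repair is to set $\alpha:=\tilde d_{-1}$ (note $\Delta_{-1}=1$), so that $\alpha d_n/\Delta_n=\tilde d_n/\Delta_n\ge1$ gives the left inequality outright; convergence of $r_n$ to $1/\alpha$ should then be deduced \emph{from} the right inequality, not used as an input to it.

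The right inequality itself is only sketched, and you explicitly leave its ``most delicate point'' --- the bootstrap for the self-referential forcing $f_m$, which depends on $\delta_{2n}$ and hence on the unknown solution --- unresolved; that estimate is the heart of the lemma. It is reachable along your route: close the bootstrap for $S=\sup_{n}r_n^{(N)}$ via $S\le1+S(a-1)\sum_{m}\bar\delta_{m+1}/\Delta_m$ together with the telescoping bound $\sum_m\frac1{k_{m+2}k_{m+3}}\le\frac1{k_1}\le\frac1{2a}$ (this is where $k_1\ge2a$ enters), which yields $S\le a$; then $t_n\le\sum_{m\ge n}f_m\le2(a-1)S\sum_{m\ge n}\frac{1}{k_{m+2}k_{m+3}k_{m+4}}=O\bigl(\frac{a-1}{k_{n+2}k_{n+3}}\bigr)$, comfortably below the target --- your two-part accounting with each half ``roughly $(a-1)/k_{n+2}$'' actually overestimates what the series gives. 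For comparison, the paper's construction is genuinely different and sidesteps the bootstrap entirely: it runs a Picard-type iteration $d_n^{(r)}$ on the whole sequence starting from $d_n^{(0)}=\Delta_n$, proves the multiplicative estimate $d_n^{(r+1)}/d_n^{(r)}\le\sup_{s\ge n+r}d_s^{(1)}/d_s^{(0)}<1+2(a-1)/(k_{n+r+2}k_{n+r+3})$, and lets the product over $r$ telescope directly to $e^{2(a-1)/k_{n+2}}$, with $\alpha=d_{-1}^*$.
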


\begin{proof}
Let us put $d_n^{(0)}=\Delta_n$, $n\ge-1$, and define recurrently $d_{n}^{(r+1)}=k_{n+2}d_{n+1}^{(r)}+d_{n+2}^{(r)}+(a-1)\delta_{n}^{(r)}$ for $n\ge-1$, $r\ge0$, with $\delta_{n}^{(r)}=\sum_{s=0}^{+\infty}a^{s}d_{n+4s+2}^{(r)}$ for $n$ even, $\delta_{n}^{(r)}=0$ for $n$ odd. It is obvious that $d_n^{(r)}$ and $\delta_n^{(r)}$ are non-decreasing w.r.t. $r$ for any fixed $n$.

At the moment we cannot state that all defined quantities are finite, but it will be proven shortly. Since we have $\Delta_n=k_{n+2}\Delta_{n+1}+\Delta_{n+2}=(k_{n+2}k_{n+3}+1)\Delta_{n+2}+k_{n+2}\Delta_{n+3}>k_{n+2}k_{n+3}\Delta_{n+2}$, $n\ge-1$, it follows that $\frac{\Delta_{n+4s+2}}{\Delta_n}<\frac{1}{k_{n+2}\dots k_{n+4s+3}}$, $s\ge0$, and
\begin{equation}\label{eq:delta_n/Delta_n}
\frac{\delta_{n}^{(0)}}{\Delta_{n}}<\sum_{s=0}^{+\infty}\frac{a^s}{k_{n+2}\dots k_{n+3+4s}}<\frac{2}{k_{n+2}k_{n+3}}
\end{equation}
(here we used the assumed lower bound $2a$ for all $k_n$).

It is easy to see from the definition that (as soon as $d_n^{(r)}<+\infty$ for all $n\ge-1$)
$$
\frac{d_n^{(r+1)}}{d_n^{(r)}}\le\sup_{s\ge n+1}\frac{d_s^{(r)}}{d_s^{(r-1)}}\le\dots\le\sup_{s\ge n+r}\frac{d_s^{(1)}}{d_s^{(0)}}.
$$
Since $d_s^{(1)}=\Delta_s+(a-1)\delta_s^{(0)}$, $d_s^{(0)}=\Delta_s$, and $(k_n)_n$ is strictly increasing, the two latter displayed inequalities give us an estimate
$$
\frac{d_n^{(r+1)}}{d_n^{(r)}}<1+\frac{2(a-1)}{k_{n+r+2}k_{n+r+3}}.
$$
It follows that
$$
\log\frac{d_n^{(r+1)}}{\Delta_n}=\sum_{s=0}^r\log\frac{d_n^{(s+1)}}{d_n^{(s)}}<\sum_{s=0}^r\frac{2(a-1)}{k_{n+s+2}k_{n+s+3}}
<2\sum_{s=0}^r\left(\frac{a-1}{k_{n+s+2}}-\frac{a-1}{k_{n+s+3}}\right)<\frac{2(a-1)}{k_{n+2}}.
$$
In particular, all $d_n^{(r)}$ and $\delta_n^{(r)}$ are finite. Moreover, the whole set $\{d_n^{(r)}\}_{n,r}$ is bounded. Therefore there exist finite limits $d_n^*=\lim_{r\to+\infty}d_n^{(r)}$, $\delta_n^*=\lim_{r\to+\infty}\delta_n^{(r)}$, $n\ge-1$, that satisfy
$$
d_{n}^*=k_{n+2}d_{n+1}^*+d_{n+2}^*+(a-1)\delta_{n}^*,\quad\delta_{2n}^*=\sum_{s=0}^{+\infty}a^{s}d_{2n+4s+2}^*.
$$
By scaling them all down by putting $d_n=d_n^*/d_{-1}^*$, $n\ge-1$, we obtain the sequence that satisfies the conditions of the lemma with $\alpha=d_{-1}^*$.
%(It is easy to see from the proof that the inequality (\ref{eq:d_n/Delta_n}) is strict indeed, and it is a rough estimate anyway.)
\end{proof}

We assume $\delta_n=0$ for $n$ odd.

Notice that (\ref{d_n}) and (\ref{delta_n}) imply the inequalities:
\begin{equation}\label{eq:bounds on d_n and delta_n}
\frac{d_{n+2}}{d_{n}}<\frac{1}{k_{n+2}k_{n+3}},\qquad\frac{\delta_{2n}}{d_{2n}}<\frac{2}{k_{2n+2}k_{2n+3}}
\end{equation}

Now let us produce our main example of a piecewise linear circle homeomorphism $T$ through its lift function $f_0:\R\to\R$, $f_0(x+1)\equiv f_0(x)+1$, given by
$$
f_0(x)=\left\{\begin{array}{l}
ax+d_0,\quad0\le x\le \delta_0;\\
x+d_0+(a-1)\delta_0,\quad\delta_0\le x\le d_0+a\delta_2;\\
a^{-1}x+(2-a^{-1})d_0+(a-1)\delta_0+(a-1)\delta_2,\quad d_0+a\delta_2\le x\le d_0+a\delta_0+a\delta_2;\\
x+d_0,\quad d_0+a\delta_0+a\delta_2\le x\le1.
\end{array}\right.
$$
(It easily follows from (\ref{eq:bounds on d_n and delta_n}) that $d_0+a\delta_0+a\delta_2<1$.)
As one can see, $f_0(1)=1+d_0=f_0(0)+1$ indeed, and thus defined $T$ has four breaks: two of size $a$ and two of size $a^{-1}$. In the next section we will show that the rotation number of $T$ is $\rho$, its break points lie on four distinct trajectories, and the invariant measure of $T$ is absolutely continuous w.\ r.\ t.\ the Lebesgue measure $\ell$.

\begin{theorem}\label{theorem:main}
The circle diffeomorphism with breaks $T$ defined above has rotation number $\rho$, its four break points lie on pairwise distinct trajectories, and its invariant measure $\mu$ is absolutely continuous w.~r.~t.\ the Lebesgue measure $\ell$.
\end{theorem}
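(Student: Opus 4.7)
The plan is to construct the dynamical partition $\Pdyn_n$ of $T$ inductively, identifying its atom lengths with the sequence $(d_n)$ from Lemma~\ref{lemma:sequence}, and then to deduce the rotation number, distinct break trajectories, and absolute continuity from this structure together with the bound \eqref{eq:d_n/Delta_n}.

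Take $\Pdyn_n$ to be the partition of $\unitcircle$ generated by the orbit points $\{T^i(0):0\le i<q_n+q_{n-1}\}$. The inductive claim on $n$ has two parts: (i) the cyclic order of these orbit points on $\unitcircle$ coincides with that of the corresponding $R_\rho$-orbit, so $\Pdyn_n$ has the standard renormalization structure of $q_n$ long atoms and $q_{n-1}$ short atoms; and (ii) all long atoms have Lebesgue length $d_{n-1}$ and all short atoms have Lebesgue length $d_n$, except that a few atoms whose interior is crossed by the forward orbit of a break point carry an explicit excess of the form $(a-1)a^s d_{n+4s+2}$, with no atom having less than the standard length. The base cases $n=0,1$ are a direct calculation with the four-piece formula for $f_0$; the particular positions $\delta_0$, $d_0+a\delta_2$, $d_0+a\delta_0+a\delta_2$ of the breaks are chosen precisely so that this length structure comes out at level $1$.

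For the inductive step, each long atom of $\Pdyn_n$ is subdivided into $k_{n+1}$ long atoms plus one short atom of $\Pdyn_{n+1}$. On atoms disjoint from the break-propagation set, the relevant iterate of $T$ acts as a pure translation and the subdivision reproduces the even-index recurrence $d_m=k_{m+2}d_{m+1}+d_{m+2}$ of \eqref{d_n}. On atoms traversing a break, each passage across the expansion piece of $f_0$ contributes a factor $a$ and each passage across the contraction piece a factor $a^{-1}$. The geometry of $f_0$ forces these crossings to alternate every two renormalizations, and summing the resulting geometric series in $a$ produces exactly $\delta_{2k}=\sum_{s\ge 0}a^s d_{2k+4s+2}$, yielding the odd-index recurrence of \eqref{d_n}.

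From (i) one reads off that the rotation number of $T$ equals $\rho$, since the cyclic ordering of any orbit under an orientation-preserving circle homeomorphism with an irrational rotation number determines that number; the four break points lie on pairwise distinct trajectories by a direct check that no $T^k(b_i)$ equals $b_j$, using the explicit piecewise-linear formula together with the atom separation provided by (i)--(ii). For absolute continuity, define $\mu$ by assigning mass $\Delta_{n-1}$ to each long atom and $\Delta_n$ to each short atom of $\Pdyn_n$; this is consistent under refinement by the recurrence $\Delta_{n-1}=k_{n+1}\Delta_n+\Delta_{n+1}$, has total mass $q_n\Delta_{n-1}+q_{n-1}\Delta_n=1$, and is $T$-invariant since $T$ permutes atoms at each level. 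By (ii), every atom $I$ of $\Pdyn_n$ satisfies $\ell(I)\ge d_n$ (short) or $\ell(I)\ge d_{n-1}$ (long), and then \eqref{eq:d_n/Delta_n} gives $\mu(I)/\ell(I)\le \alpha$ uniformly in $n$ and in $I$. Since atoms of the $\Pdyn_n$ generate the Borel $\sigma$-algebra, this yields $\mu\le\alpha\ell$, hence $\mu\ll\ell$. The principal obstacle is the bookkeeping in (ii): tracing at each level which atoms are affected by which break and verifying that the cumulative effect of alternating $a^{\pm 1}$ factors across successive renormalizations reproduces exactly the series $(a-1)\delta_{2k}$ of Lemma~\ref{lemma:sequence}; once this finite but delicate combinatorial step is in place, the lemma does the rest.
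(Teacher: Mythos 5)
Your route is genuinely different from the paper's: you aim for the global bound $\mu\le\alpha\ell$ by showing $\mu(I)\le\alpha\,\ell(I)$ for every atom $I$ of every dynamical partition, whereas the paper never controls all atoms. Instead it builds a positive-measure set $\Theta$ staying away from the preimages of the break points (Lemma~\ref{lemma:nabla_i^n}, Proposition~\ref{prop:Theta}), proves that $\phi$ is differentiable with $\phi'=\alpha$ at every point of $\Theta$ (Proposition~\ref{prop:differentiability}, via length estimates only for atoms of bounded depth inside fundamental segments based at points of $\Theta$), and then invokes the dichotomy that $\mu$ is either absolutely continuous or singular: singularity would force $\phi'=0$ a.e., contradicting $\phi'=\alpha>0$ on $\Theta$. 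Your scheme, if it worked, would be both simpler and stronger (it would give $d\mu/d\ell\le\alpha$ in $L^\infty$, which the paper does not claim).

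The gap is your assertion in (ii) that \emph{no atom ever has less than the standard length} $d_n$ (resp.\ $d_{n-1}$). This is the entire difficulty, and nothing in your sketch addresses it. Since $\mu(\Delta_i^{(n)})=\Delta_n$ and \eqref{eq:d_n/Delta_n} gives $\Delta_n\le\alpha d_n$ with $\alpha d_n/\Delta_n\to1$, there is no slack: a single atom of length $(1-\eps)d_n$ already breaks $\mu(I)\le\alpha\ell(I)$. Now the renormalized maps $f_n$ carry \emph{both} an expanding tooth and a contracting tooth (Propositions~\ref{prop:g_n,f_n} and~\ref{prop:f_n,eta,g_n,eta}), and these are offset: e.g.\ for $f_{2n}$ the image of the expanding tooth $(b_{2n},b_{2n}+\delta_{2n})$ under $f_{2n}$ covers only part of the contracting tooth, leaving a strip of length $a\delta_{2n+2}$ on which two successive applications of $f_{2n}$ act as a net contraction by $a^{-1}$ with no compensating expansion \emph{within that renormalization cycle}; the compensation, if any, happened at an earlier scale. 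So whether an atom that meets a contracting tooth has been pre-stretched by at least the amount it is about to lose is a statement that couples all scales at once, and "crossings alternate" is not enough --- you need the expansion to come first and to dominate the contraction on exactly the overlapping portion. The paper itself only obtains two-sided multiplicative control $\ell(\Delta_i^{(n+s-1)})/d_{n+s-1}\to1$, for depths $s\le3$ and only inside fundamental segments at points of $\Theta$ (Lemma~\ref{lemma:Gamma}, whose proof already needs special treatment of exceptional segments at $s=3$); it never asserts a sign for the deviation. A crude Denjoy-type bound also fails to rescue you with a weaker constant, because for large $n$ many disjoint atoms of $\Pdyn_n$ meet the fixed contracting interval of $T$, so the naive product of averaged derivatives is not uniformly bounded below. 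Two smaller issues: "a direct check that no $T^k(b_i)$ equals $b_j$" is not an argument (there are infinitely many $k$; the paper instead uses that every $f_n$ retains four distinct break points), and the $T$-invariance of your limit measure needs a word of care since $T$ does not literally permute the atoms of a fixed $\Pdyn_n$.
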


In what follows, we will represent a continuous piecewise linear function defined on a segment by indicating the values of its derivative over those subsegments where it differs from~1, rather than expressions for the function itself, as it makes formulas much more intuitive. (Such information together with a value at a single point lets one restore the whole function.) For example:
$$
f_0'(x)=\left\{\begin{array}{l}
a,\quad x\in(0,\delta_0);\\
a^{-1},\quad x\in(d_0+a\delta_2,d_0+a\delta_0+a\delta_2);\\
1\quad\text{elsewhere on [0,1]},
\end{array}\right.
$$
and this information together with the value $f_0(0)=d_0$ determines $f_0$ on $[0,1]$.

\section{Proof Of The Claim}

\subsection{Renormalization Basics}

For analyzing metrical properties of the dynamical system induced on $\unitcircle$ by the homeomorphism $T$ we will apply the renormalization approach, described in detail, for ex., in \cite{KT-cmp2013} as follows.

Given a circle homeomorphism $T$ with irrational rotation number $\rho$, one may `mark' a point $\xi_0\in\unitcircle$, consider its trajectory $\xi_i=\xi_i(\xi_0)=T^i\xi_0\in\unitcircle$, $i\in\Z$, and pick out of it the sequence of the `dynamical convergents' $\{\xi_{q_n}\}_n$ indexed by the denominators of the consecutive rational
convergents to $\rho$. The dynamical convergents approach the marked point $\xi_0$, alternating their order in the following way:
$$
\xi_{q_{\!-1}}<\xi_{q_1}<\xi_{q_3}<\dots<\xi_{q_{2m+1}}<\dots<\xi_0<\dots<\xi_{q_{2m}}<\dots<\xi_{q_2}<\xi_{q_0}
$$
(the point $\xi_{q_{-\!1}}$ on the circle coincides with $\xi_0$; here it may be seen as $\xi_{q_{-\!1}}=\xi_0-1$). Define the $n$th `fundamental segment'
$\Delta_0^{(n)}=\Delta_0^{(n)}(\xi_0)$ as the circle arc $[\xi_0,\xi_{q_n}]$ if $n$ is even and $[\xi_{q_n},\xi_0]$ if $n$ is odd.
The iterates $T^{q_{n}}$ and $T^{q_{n-1}}$ restricted to $\Delta_0^{(n-1)}$ and $\Delta_0^{(n)}$ respectively are
two continuous components of the first-return map for $T$ on the segment
$\overline\Delta_0^{(n-1)}=\Delta_0^{(n-1)}\cup\Delta_0^{(n)}$.
The consecutive images of $\Delta_0^{(n-1)}$ and
$\Delta_0^{(n)}$ until the return to $\overline\Delta_0^{(n-1)}$ cover the whole circle without overlapping
beyond their endpoints, thus forming the $n$th `dynamical partition'
$\Pdyn_n(\xi_0)=\{\Delta_i^{(n-1)},0\le i<q_{n}\}\cup\{\Delta_i^{(n)},0\le i<q_{n-1}\}$ of $\unitcircle$,
where $\Delta_i^{(n)}=\Delta_i^{(n)}(\xi_0)$ stands for $T^i\Delta_0^{(n)}$.

For given $n\ge0$, one may consider the pair of
functions $(f_{n,\xi_0},g_{n,\xi_0})$ called sometimes the $n$-th `pre-renormalization' of $T$, which are the mappings $T^{q_{n}}$ and $T^{q_{n-1}}$ restricted to $\Delta_0^{(n-1)}$
and $\Delta_0^{(n)}$ respectively, in the coordinate system with the origin at $\xi_0$:
$$
f_{n,\xi_0}=\pi_{\xi_0}\circ T^{q_{n}}\circ\pi_{\xi_0}^{-1},\qquad g_{n,\xi_0}=\pi_{\xi_0}\circ T^{q_{n-1}}\circ\pi_{\xi_0}^{-1},
$$
where $\pi_{\xi_0}$ is a linear mapping in a neighbourhood of $\xi_0$ that sends $\xi_0$ to $0$ and changes neither length no orientation. We will omit the index $\xi_0$, when it is clear which point is marked. Both $f_n$ and $g_n$ are strictly increasing continuous functions defined on the segments $[-\ell(\Delta_0^{(n-1)}),0]$ and $[0,\ell(\Delta_0^{(n)})]$ for $n$ even and on $[0,\ell(\Delta_0^{(n-1)})]$ and $[-\ell(\Delta_0^{(n)}),0]$ for $n$ odd respectively. Since $f_n(0)=(-1)^n\ell(\Delta_0^{(n)})$, $g_n(0)=(-1)^{n-1}\ell(\Delta_0^{(n-1)})$ and $f_n(g_n(0))=g_n(f_n(0))$, the functions $f_n$ and $g_n$ are said to form a `commuting pair'. (Notice that our pairs $(f_n,g_n)$ differ from those in~\cite{KT-cmp2013} by having not been rescaled.)

This definition is slightly ambiguous for $n=0$, in which case we define $f_0$ on the segment $[-1,0]$ representing the unit circle $\unitcircle$ cut by the marked point $\xi_0$ (one can write $\Delta_0^{(-1)}=[\xi_0-1,\xi_0]$), so that $f_0$ is a lift of $T$ onto $\R$, shifted to the marked point, while $g_0=\Id-1$ defined on $[0,f_0(0)]$.

Notice that $g_{n+1}$ is $f_n$ restricted to a smaller segment, while $f_{n+1}=f_{n}^{k_{n+1}}\circ g_{n}$, and the `renormalization height' $k_{n+1}$ is the largest integer $k$ such that $f_n^k(g_n(0))$ has the same sign as $g_n(0)$ (it is also a partial quotient in the continued fraction $\rho=[k_1,k_2,\dots,k_n,\dots]$), $n\ge0$.

In this paper, we shall also consider the `backward dynamical partition segments' $\nabla_i^{(n)}=T^{-i}\nabla_0^{(n)}$, where $\nabla_0^{(n)}=\nabla_0^{(n)}(\xi_0)$ is the circle arc $[\xi_0,\xi_{-q_n}]$ if $n$ is odd and $[\xi_{-q_n},\xi_0]$ if $n$ is even, and define the `backward pre-renormalization' as the pair of functions
$$
\cf_{n}=\pi_{\xi_0}\circ T^{-q_{n}}\circ\pi_{\xi_0}^{-1},\qquad \cg_{n}=\pi_{\xi_0}\circ T^{-q_{n-1}}\circ\pi_{\xi_0}^{-1}
$$
restricted to $\nabla_0^{(n-1)}$ and $\nabla_0^{(n)}$ respectively. These functions have properties similar to $f_n$ and $g_n$, in particular $\cf_{n+1}=\cf_{n}^{k_{n+1}}\circ\cg_{n}$.

\subsection{Measurements}
\label{subsection:measurements}

Now, let us turn to the specific homeomorphism $T$ that was defined at the end of Section~2 (notice that the meaning of $f_0$ there is consistent with Subsection~3.1, though the expressions were written for the segment $[0,1]$ rather than $[-1,0]$).
%For $n\ge1$ there is no sense to extend $f_n$ and $g_n$ to whole $\R$.

\begin{proposition}\label{prop:g_n,f_n}
For every $n\ge0$, we have $\ell(\Delta_0^{(n)})=d_n$, and for every $n\ge1$

$$
g_{2n-1}'(x)=1\text{ everywhere on }[-d_{2n-1},0],
$$

$$%\begin{equation}
f_{2n-1}'(x)=\left\{\begin{array}{l}
a,\quad x\in(0,\delta_{2n});\\
a^{-1},\quad x\in(\delta_{2n-2},\delta_{2n-2}+a\delta_{2n});\\
1\text{ elsewhere on }[0,d_{2n-2}],
\end{array}\right.
$$%\end{equation}

$$%\begin{equation}
f_{2n}'(x)=\left\{\begin{array}{l}
a^{-1},\quad x\in(-d_{2n-1}+a\delta_{2n+2},-d_{2n-1}+a\delta_{2n+2}+a\delta_{2n});\\
1\text{ elsewhere on }[-d_{2n-1},0],
\end{array}\right.
$$%\end{equation}

$$%\begin{equation}
g_{2n}'(x)=\left\{\begin{array}{l}
a,\quad x\in(0,\delta_{2n});\\
1\text{ elsewhere on }[0,d_{2n}].
\end{array}\right.
$$%\end{equation}

The rotation number of $T$ is $\rho=[k_1,k_2,\dots,k_n,\dots]$.
\end{proposition}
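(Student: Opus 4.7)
My plan is to proceed by induction on $n\ge 0$, simultaneously verifying three things: (i) $\ell(\Delta_0^{(n)})=d_n$; (ii) the stated formulas for $f_n'$ and $g_n'$ (with $n=0$ being the defining expression for $f_0$ and $g_0$); and (iii) that the renormalization height between levels $n$ and $n+1$ is exactly $k_{n+1}$, so that by the characterization recalled in Subsection~3.1 the $(n+1)$-st partial quotient of $\rho(T)$ equals $k_{n+1}$. The inductive engine will be the identity $f_{n+1}=f_n^{k_{n+1}}\circ g_n$ combined with the identification of $g_{n+1}$ as the restriction of $f_n$ to the sub-segment $\Delta_0^{(n+1)}\subset\Delta_0^{(n-1)}$.

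For the base case I would directly iterate $f_0$ on $g_0([0,d_0])=[-1,-1+d_0]$. The first iteration crosses the two slope-$a$ breaks of $f_0$ (at $-1$ and $-1+\delta_0$), and its image $[d_0,2d_0+(a-1)\delta_0]$ crosses the slope-$a^{-1}$ break at $d_0+a\delta_2$ during the second iteration, but misses the one at $d_0+a\delta_0+a\delta_2$ by the inequality $2d_0+(a-1)\delta_0>d_0+a\delta_0+a\delta_2$, which follows from~(\ref{eq:bounds on d_n and delta_n}). The remaining $k_1-2$ iterations lie entirely inside the slope-$1$ region $[d_0+a\delta_0+a\delta_2,1]$ of $f_0$ and each contributes the translation $d_0$, giving total displacement $k_1 d_0+(a-1)\delta_0=1-d_1$ by~(\ref{d_n}) at $n=-1$. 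Reading off the slope products yields the claimed formula for $f_1'$ and $\ell(\Delta_0^{(1)})=d_1$; since $\Delta_0^{(1)}=[-d_1,0]$ sits inside the slope-$1$ region of $f_0$, the function $g_1$ is a pure translation as claimed.

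For the inductive step, (\ref{eq:bounds on d_n and delta_n}) forces $d_{n+1}$ to be small enough that $\Delta_0^{(n+1)}$ meets at most one break region of $f_n$, which yields the claimed structure of $g_{n+1}$. To compute $f_{n+1}=f_n^{k_{n+1}}\circ g_n$, I would track the iterates $y_j=f_n^j(g_n(x))$ for $j=0,\ldots,k_{n+1}-1$. The recursions in Lemma~\ref{lemma:sequence} force $y_0,\ldots,y_{k_{n+1}-2}$ to all land in slope-$1$ regions of $f_n$ (each step being a clean translation), while only the terminal iterate $y_{k_{n+1}-1}$ can cross the non-trivial break region(s) of $f_n$, and it does so precisely when $x$ lies in a narrow subinterval near the appropriate endpoint of $\Delta_0^{(n-1)}$. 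Solving for those $x$-values would place the breaks of $f_{n+1}$ at the prescribed locations, and~(\ref{d_n}) simultaneously delivers $\ell(\Delta_0^{(n+1)})=d_{n+1}$ and confirms that $k_{n+1}$ is indeed the maximal iteration count keeping $y_j$ on the same side of $0$ as $g_n(0)$.

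The hard part will be the parity-dependent bookkeeping for the terminal iterate. At the even-to-odd transition, the slope-$a$ break of $g_{2n}$ on $(0,\delta_{2n})$ amplifies the motion over a sub-interval of length $\delta_{2n}$, and accumulating these amplifications across $k_{2n+1}$ steps is what produces the $(a-1)\delta_{2n}$ correction in $d_{2n-1}=k_{2n+1}d_{2n}+d_{2n+1}+(a-1)\delta_{2n}$. Checking that this amplification interlocks with the slope-$a^{-1}$ break of $f_{2n}$ so as to produce exactly the two-break structure of $f_{2n+1}$---with the fine-scale positions on $(0,\delta_{2n+2})$ and $(\delta_{2n},\delta_{2n}+a\delta_{2n+2})$ governed by $\delta_{2n}=d_{2n+2}+a\delta_{2n+4}$---is where the precise form of the recurrences in Lemma~\ref{lemma:sequence} becomes indispensable.
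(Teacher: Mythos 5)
Your overall strategy is exactly the paper's: induct via $f_{n+1}=f_n^{k_{n+1}}\circ g_n$, identify $g_{n+1}$ as a restriction of $f_n$, track the segments $[f_n^{j}(g_n(0)),f_n^{j\pm1}(g_n(0))]$, and let the recursions of Lemma~\ref{lemma:sequence} deliver both $\ell(\Delta_0^{(n+1)})=d_{n+1}$ and the renormalization height $k_{n+1}$. Your base case and your treatment of $g_{n+1}$ are essentially right (though ``misses the break at $d_0+a\delta_0+a\delta_2$'' is the opposite of what your inequality $2d_0+(a-1)\delta_0>d_0+a\delta_0+a\delta_2$ shows: the image segment \emph{contains} the whole lower tooth, which is precisely why the length is restored to $d_0$).

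The genuine gap is in the stated mechanism of the inductive step: the claim that only the terminal iterate $y_{k_{n+1}-1}$ can meet a break region of $f_n$ is true only at the odd-to-even transitions. There the teeth of $f_{2n-1}$ sit in $[0,\delta_{2n-2}+a\delta_{2n}]$, the iterates march \emph{down} from $d_{2n-2}$ and only the last segment $[d_{2n},d_{2n}+d_{2n-1}]$ contains the (single relevant) lower tooth, since $\delta_{2n-2}=d_{2n}+a\delta_{2n+2}$ by (\ref{delta_n}). But at the even-to-odd transitions the single tooth of $f_{2n}$ lies at $(-d_{2n-1}+a\delta_{2n+2},\,-d_{2n-1}+a\delta_{2n+2}+a\delta_{2n})$, i.e.\ inside the \emph{first} segment $[g_{2n}(0),f_{2n}(g_{2n}(0))]=[-d_{2n-1},-d_{2n-1}+d_{2n}+(a-1)\delta_{2n}]$; it is the first application of $f_{2n}$ (interacting with the tooth of $g_{2n}$ on $(0,\delta_{2n})$ at step zero) that produces the two teeth of $f_{2n+1}$ by partial annihilation, and all subsequent applications are clean translations. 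Executing your plan as written would place the breaks of $f_{2n+1}$ near the wrong end and miss this interaction. Relatedly, the $(a-1)\delta_{2n}$ term in $d_{2n-1}=k_{2n+1}d_{2n}+d_{2n+1}+(a-1)\delta_{2n}$ does not come from ``accumulating amplifications across $k_{2n+1}$ steps'': it enters exactly once, because the first application of $f_{2n}$ displaces the left endpoint $-d_{2n-1}$ by $d_{2n}+(a-1)\delta_{2n}$ while every later step displaces by exactly $d_{2n}$; an accumulation over $k_{2n+1}$ steps would give a contribution growing with $k_{2n+1}$, which is not what the recursion says. Your closing paragraph shows you sense the correct picture for the even-to-odd case, but it contradicts the blanket claim in your middle paragraph, and the proof cannot stand until the two parities are treated separately as in the paper.
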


\begin{remark}
When we state that a formula like those above holds, it is a part of our statement that the segments in it are positioned correctly (for example, formula for $f_{2n-1}$ includes the claim that the segments $(0,\delta_{2n})$ and $(\delta_{2n-2},\delta_{2n-2}+a\delta_{2n})$ do not overlap and are both contained within the segment $[0,d_{2n-2}]$). Such correctness is usually easy to check due to (\ref{eq:bounds on d_n and delta_n}) and the assumed lower bounds on $k_n$.
\end{remark}

\begin{proof}
On each induction step $m\ge1$, one must follow the sequence of points
$$
g_{m-1}(0),f_{m-1}(g_{m-1}(0)),\dots,f_{m-1}^k(g_{m-1}(0)),\dots
$$
until the moment right before it jumps over zero, and each moment $k$ calculate the function $f_{m-1}^k\circ g_{m-1}$
%as it maps the segment $[0,f_m(0)]$ onto  $[f_m^{k-1}(g_m(0)),f_m^k(g_m(0))]$ for $m$ even, or $[f_m(0),0]$ onto $[f_m^{k}(g_m(0)),f_m^{k-1}(g_m(0))]$ for $m$ odd. This calculation is done
by composing a previously calculated $f_{m-1}^{k-1}\circ g_{m-1}$ with $f_{m-1}$ on an appropriate segment. The last moment will be the renormalization height $k_{m}$ (a partial quotient in the continued fraction expansion for the rotation number of $T$), while the function $f_{m-1}^{k_m}\circ g_{m-1}$ will be the new $f_m$.

For $m=1$, the segment $[g_{0}(0),f_{0}(g_{0}(0))]$ has length $d_0$. The length of the segment $[f_{0}(g_{0}(0)),f_{0}^{2}(g_{0}(0))]$ is $d_0+(a-1)\delta_0$ due to stretching by the `upper tooth' of derivative $f_0'(x)=a$ on $(-1,-1+\delta_0)\subset[g_{0}(0),f_{0}(g_{0}(0))]=[-1,-1+d_0]$. The length of $[f_{0}^{k-1}(g_{0}(0)),f_{0}^{k}(g_{0}(0))]$ for $k>2$ is $d_0$ again due shrinking back by the `lower tooth' of derivative $f_0'(x)=a^{-1}$ on $(-1+d_0+a\delta_2,-1+d_0+a\delta_0+a\delta_2)\subset[f_0(g_{0}(0)),f_{0}^2(g_{0}(0))]=[-1+d_0,-1+2d_0+\delta_0]$. The renormalization height is $k_{1}$, and $f_{1}(0)=f_{0}^{k_{1}}(g_{0}(0))=-d_{1}$, both due to (\ref{d_n}). The two mentioned teeth of non-unit derivative partially annihilate each other, and their leftovers appear as the appropriate teeth in new $f_1=f_0^{k_1}\circ g_0$ on $[0,d_0]$.

For $m=2n$, $n\ge1$, the lengths of all segments $[f_{2n-1}^{k}(g_{2n-1}(0)),f_{2n-1}^{k-1}(g_{2n-1}(0))]$ is equal to $d_{2n-1}$. The renormalization height is $k_{2n}$, and $f_{2n}(0)=f_{2n-1}^{k_{2n}}(g_{2n-1}(0))=d_{2n}$, both due to (\ref{d_n}). The only `tooth' of non-unit derivative, which is $f_{2n-1}'(x)=a^{-1}$ on $(\delta_{2n-2},\delta_{2n-2}+a\delta_{2n})\subset[d_{2n},d_{2n}+d_{2n-1}]=[f_{2n-1}^{k_{2n}}(g_{2n-1}(0)),f_{2n-1}^{k_{2n}-1}(g_{2n-1}(0))]$, appears at the last moment $k=k_{2n}$, and is being transferred to the segment $[-d_{2n-1},0]$ where new $f_{2n}$ is defined. It will be exactly the tooth $f_{2n}'(x)=a^{-1}$ on $(-d_{2n-1}+a\delta_{2n+2},-d_{2n-1}+a\delta_{2n+2}+a\delta_{2n})$ due to the relation $\delta_{2n-2}-d_{2n}=a\delta_{2n+2}$ implied by (\ref{delta_n}).

For $m=2n+1$, $n\ge1$, it is similar to the case of $m=1$: all the segments $[f_{2n}^{k-1}(g_{2n}(0)),f_{2n}^{k}(g_{2n}(0))]$, $k\ge1$, but one have lengths $d_{2n}$ with the only difference that the longer segment of length $d_{2n}+(a-1)\delta_{2n}$ we have for $k=1$ rather than for $k=2$. The upper tooth $g_{2n}'(x)=a$ on $(0,\delta_{2n})$ interacts with the lower tooth $f_{2n}'=a^{-1}$ on $(-d_{2n-1}+a\delta_{2n+2},-d_{2n-1}+a\delta_{2n+2}+a\delta_{2n})\subset[g_{2n}(0),f_{2n}(g_{2n}(0))]=[-d_{2n-1},-d_{2n-1}+d_{2n}+(a-1)\delta_{2n}]$ giving birth to the appropriate teeth in new $f_{2n+1}$. The renormalization height is $k_{2n+1}$, and $f_{2n+1}(0)=f_{2n}^{k_{2n+1}}(g_{2n}(0))=-d_{2n+1}$, both due to (\ref{d_n}).
\end{proof}

\begin{proposition}\label{prop:cf_n,cg_n}
For every $n\ge0$, we have $\ell(\nabla_0^{(n)})=d_n$,

$$%\begin{equation}
\cg_{2n}'(x)=1\text{ everywhere on }[-d_{2n},0],
$$%\end{equation}

$$%\begin{equation}
\cf_{2n}'(x)=\left\{\begin{array}{l}
a^{-1},\quad x\in(d_{2n},d_{2n}+a\delta_{2n});\\
a,\quad x\in(2d_{2n}+(a-1)\delta_{2n}+a\delta_{2n+2},2d_{2n}+a\delta_{2n}+a\delta_{2n+2});\\
1\text{ elsewhere on }[0,d_{2n-1}],
\end{array}\right.
$$%\end{equation}

$$%\begin{equation}
\cf_{2n+1}'(x)=\left\{\begin{array}{l}
a^{-1},\quad x\in(-d_{2n+1},-d_{2n+1}+a\delta_{2n+2});\\
a,\quad x\in(-d_{2n+1}+a\delta_{2n+2}+\delta_{2n}-\delta_{2n+2},-d_{2n+1}+a\delta_{2n+2}+\delta_{2n});\\
1\text{ elsewhere on }[-d_{2n},0],
\end{array}\right.
$$%\end{equation}

$$
\cg_{2n+1}'(x)=1\text{ everywhere on }[0,d_{2n+1}].
$$
\end{proposition}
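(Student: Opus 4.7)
The plan is to argue by induction on $n$, mirroring the proof of Proposition~\ref{prop:g_n,f_n}. The backward pre-renormalizations satisfy the analogous recursions $\cf_{n+1}=\cf_n^{k_{n+1}}\circ\cg_n$ and $\cg_{n+1}=\cf_n|_{\nabla_0^{(n+1)}}$. A useful structural remark, already visible in the statement, is that every $\cg_n$ has unit derivative on its whole domain: using the bounds (\ref{eq:bounds on d_n and delta_n}) and the assumption $k_1\ge5$, the short sub-segment of length $d_{n+1}$ to which $\cf_n$ is restricted lies strictly inside the linear portion of $\cf_n$, missing both teeth.

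For the base case $n=0$, I would compute $\cf_0$ directly by inverting the piecewise-linear formula for $f_0$ from Section~2. Inversion swaps the two teeth: the upper tooth $f_0'=a$ on $(0,\delta_0)$, whose image under $f_0$ is the interval $(d_0,\,d_0+a\delta_0)$ of length $a\delta_0$, becomes a lower tooth $\cf_0'=a^{-1}$ on the same interval; likewise the lower tooth $f_0'=a^{-1}$ on $(d_0+a\delta_2,\,d_0+a\delta_0+a\delta_2)$ becomes an upper tooth $\cf_0'=a$ on $(2d_0+(a-1)\delta_0+a\delta_2,\,2d_0+a\delta_0+a\delta_2)$, matching the claimed formula for $\cf_0$. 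At the same time $\cg_0=\Id+1$ is linear on $[-d_0,0]$ by construction, which is the case $n=0$ of the formula for $\cg_{2n}$.

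For the inductive step $m\ge1$, I would follow the orbit
\[
\cg_{m-1}(0),\quad\cf_{m-1}(\cg_{m-1}(0)),\quad\dots,\quad\cf_{m-1}^{k_m}(\cg_{m-1}(0))
\]
exactly as in the proof of Proposition~\ref{prop:g_n,f_n}. At all but one of the $k_m$ iterations, the segment $[\cf_{m-1}^{k-1}(\cg_{m-1}(0)),\,\cf_{m-1}^{k}(\cg_{m-1}(0))]$ has length $d_{m-1}$; at one exceptional iteration (whose index depends on the parity of $m$), this length is $d_{m-1}+(a-1)\delta_{m-1}$ due to the stretching or shrinking produced by the teeth of $\cf_{m-1}$. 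The relations (\ref{d_n}) and (\ref{delta_n}) then pin down the renormalization height $k_m$ and yield $\ell(\nabla_0^{(m)})=d_m$. The two teeth of $\cf_{m-1}$ partially annihilate across the $k_m$ iterations, and the leftover pieces form exactly the two teeth claimed for $\cf_m$; the new $\cg_m$ is then obtained by restricting $\cf_{m-1}$ to $\nabla_0^{(m)}$ and is linear by the structural remark above.

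The main obstacle is the case-by-case bookkeeping: for each parity of $m$ one must verify that the teeth of $\cf_{m-1}$ fall into the expected sub-intervals during the $k_m$ iterations, that their images do not collide unexpectedly, and that the residual teeth in $\cf_m$ end up at the positions specified by the statement. These are algebraic identities in $d_n$, $\delta_n$, and $k_n$ that unfold from (\ref{d_n}), (\ref{delta_n}) and (\ref{eq:bounds on d_n and delta_n}); correct positioning can be checked using the same lower bounds on the $k_n$ as in the forward case, in the spirit of the remark following Proposition~\ref{prop:g_n,f_n}.
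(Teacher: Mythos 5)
Your proposal is correct and follows essentially the same route as the paper, whose own proof is just the two-line instruction to compute $\cf_0=f_0^{-1}$ on $[0,1]$ and then analyze consecutive iterates as in Proposition~\ref{prop:g_n,f_n}; your base-case inversion of the two teeth and the inductive orbit-tracking via $\cf_{n+1}=\cf_n^{k_{n+1}}\circ\cg_n$ are exactly what that sketch intends, and in fact supply more detail (e.g.\ the exceptional segment of length $d_{m-1}+(a-1)\delta_{m-1}$, which degenerates correctly for even $m$ since $\delta_{m-1}=0$). No gap to report.
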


\begin{proof}
First we calculate $\cf_0$ as $f_0^{-1}$ restricted to $[0,1]$, and then analyze consecutive iterates similarly to the proof of Proposition~\ref{prop:g_n,f_n}.
\end{proof}

Define the sequence of sets $\Theta_n=\bigcup_{i\in N_n}\nabla_i^{(n)}\subset\unitcircle$, $n\ge0$, formed of backward dynamical partition segments, inductively as follows: let $N_0=\{0\le i\le k_1-3\}$ and $N_{n}=\{i+q_{n-1}+jq_{n}:i\in N_{n-1}, 0\le j\le k_{n+1}-3\}$, $n\ge1$. It is easy to see that $N_n\subset\{0\le i<q_{n+1}\}$ and $\Theta_{n+1}\subset\Theta_{n}$, $n\ge0$. Saying it in words, we consecutively cut away little pieces of the circle nearby every preimage of the break point $\xi_0$.

\begin{lemma}\label{lemma:nabla_i^n}
For every $i\in N_n$, $n\ge0$, we have $\ell(\nabla_i^{(n)})=d_n$.
\end{lemma}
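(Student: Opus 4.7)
The proof is by induction on $n$, with a strengthened hypothesis: for every $i \in N_n$, in addition to $\ell(\nabla_i^{(n)}) = d_n$, the map $T^{-i}$ has derivative identically $1$ on the segment $\nabla_0^{(n)}$. This stronger form is needed to transfer length preservation from $\nabla_0^{(n-1)}$ to the subsegment $\nabla_{q_{n-1}+mq_n}^{(n)}$ that arises in the inductive step; without it, one would only know that $T^{-i}$ has average derivative $1$ on $\nabla_0^{(n-1)}$, which does not pin down the length of a proper subsegment.

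For the base case $n = 0$, Proposition~\ref{prop:cf_n,cg_n} gives $\ell(\nabla_0^{(0)}) = d_0$, and explicitly $\nabla_0^{(0)} = [1-d_0,\,1]$ (identifying the marked point $\xi_0$ with $1$ on $[0,1]$). Since $f_0' \equiv 1$ on the rightmost piece $[d_0 + a\delta_0 + a\delta_2,\,1]$, iterating backwards gives $\xi_{-i} = 1 - id_0$ as long as this value stays inside that piece. Using $1 = k_1 d_0 + d_1 + (a-1)\delta_0$ from (\ref{d_n}), together with (\ref{eq:bounds on d_n and delta_n}) and the assumptions $k_1 \ge 5$, $k_1 \ge 2a$, one verifies that $\xi_{-(k_1-2)} = 2d_0 + d_1 + (a-1)\delta_0$ still lies in the linear piece, so $\nabla_i^{(0)} = [1-(i+1)d_0,\,1-id_0]$ has length $d_0$ for $0 \le i \le k_1-3$; the strengthened hypothesis also follows, since no forward-image $T^k(p)$ of a break point $p$ of $T$ can enter $\nabla_0^{(0)}$ for $1 \le k \le k_1-3$.

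For the inductive step, given $j \in N_n$ write $j = i + q_{n-1} + m q_n$ with $i \in N_{n-1}$ and $0 \le m \le k_{n+1}-3$, so that
$$
\nabla_j^{(n)} = T^{-i}\bigl(\cf_n^m(\cg_n(\nabla_0^{(n)}))\bigr) = T^{-i}\bigl(\nabla_{q_{n-1}+m q_n}^{(n)}\bigr).
$$
By Proposition~\ref{prop:cf_n,cg_n}, $\cg_n$ has derivative $1$ everywhere, while $\cf_n$ has two teeth confined to a small region near the endpoint $\xi_0$ of its domain $\nabla_0^{(n-1)}$. Thus $\cg_n(\nabla_0^{(n)}) = \nabla_{q_{n-1}}^{(n)}$ is a length-$d_n$ segment at the opposite endpoint of $\nabla_0^{(n-1)}$, and each application of $\cf_n$ shifts it by $d_n$ toward $\xi_0$. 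Using the explicit tooth positions and (\ref{eq:bounds on d_n and delta_n}), one verifies that the segment stays clear of both teeth throughout the first $k_{n+1}-2$ iterations, so $\cf_n^m\circ\cg_n$ has derivative $1$ on $\nabla_0^{(n)}$ and maps it onto $\nabla_{q_{n-1}+m q_n}^{(n)}\subset\nabla_0^{(n-1)}$ for $m\le k_{n+1}-3$. Combining this with the strengthened inductive hypothesis $(T^{-i})'|_{\nabla_0^{(n-1)}}\equiv 1$ and the chain rule yields both $(T^{-j})'|_{\nabla_0^{(n)}}\equiv 1$ and $\ell(\nabla_j^{(n)}) = d_n$, closing the induction. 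The main technical obstacle is the tooth-bookkeeping — translating ``the segment stays clear of the teeth during $k_{n+1}-2$ shifts'' into arithmetic inequalities on $d_n, d_{n-1}, \delta_n, \delta_{n-1}$ via (\ref{d_n}), (\ref{delta_n}), and (\ref{eq:bounds on d_n and delta_n}), in the same spirit as the estimates already performed in the proofs of Propositions~\ref{prop:g_n,f_n} and~\ref{prop:cf_n,cg_n}.
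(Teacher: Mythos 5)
Your proof is correct and takes essentially the same route as the paper: the paper writes $\nabla_i^{(n)}=(\cf_0^{j_1}\circ\cg_0)\circ\dots\circ(\cf_n^{j_{n+1}}\circ\cg_n)\nabla_0^{(n)}$ and notes that the consecutive images never meet the teeth of non-unit derivative of the $\cf_m$, $\cg_m$, which is exactly your strengthened induction hypothesis unrolled. Your version merely adds the explicit base-case computation and the geometric tooth-avoidance bookkeeping that the paper leaves as ``easy to check.''
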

\begin{proof}
The definition of $N_n$ implies that for every $i\in N_n$ there exist integers $0\le j_{m+1}\le k_{m+1}-3$, $0\le m\le n$, such that $i=\sum_{m=0}^{n}(q_{m-1}+j_{m+1}q_m)$. Accordingly, $\nabla_i^{(n)}=(\cf_0^{j_1}\circ\cg_0)\circ\dots\circ(\cf_n^{j_{n+1}}\circ\cg_n)\nabla_0^{(n)}$. In view of Proposition~\ref{prop:cf_n,cg_n}, it is easy to check that consecutive images of $\nabla_0^{(n)}$ in the latter sequence never fall onto the teeth of non-unit derivative of $\cf_m$ and $\cg_m$.
\end{proof}

\begin{proposition}\label{prop:Theta}
The limit set $\Theta=\bigcap_{n\ge0}\Theta_n$ has positive Lebesgue measure.
\end{proposition}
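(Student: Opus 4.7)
The plan is to show that $\ell(\Theta_n)$ is bounded below by a positive constant independent of $n$, so that $\ell(\Theta) = \lim_n \ell(\Theta_n) > 0$ by continuity of Lebesgue measure from above on the decreasing family $\Theta_n \supset \Theta_{n+1}$.

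First I would compute $\ell(\Theta_n)$ explicitly. By Lemma~\ref{lemma:nabla_i^n} every segment $\nabla_i^{(n)}$ with $i \in N_n$ has length $d_n$. The inclusion $N_n \subset \{0, 1, \dots, q_{n+1}-1\}$ together with the fact that $\{\nabla_j^{(n)} : 0 \le j < q_{n+1}\} \cup \{\nabla_j^{(n+1)} : 0 \le j < q_n\}$ is the backward dynamical partition of $\unitcircle$ at level $n+1$ guarantees that the segments making up $\Theta_n$ are pairwise disjoint modulo endpoints. From the inductive definition of $N_n$ one reads off $|N_n| = \prod_{m=1}^{n+1}(k_m - 2)$, so
\begin{equation*}
\ell(\Theta_n) = d_n \prod_{m=1}^{n+1}(k_m - 2).
\end{equation*}
Inequality (\ref{eq:d_n/Delta_n}) of Lemma~\ref{lemma:sequence} gives $d_n \ge \Delta_n/\alpha$, so it suffices to prove $\liminf_n \Delta_n \prod_{m=1}^{n+1}(k_m - 2) > 0$.

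The main estimate uses the recursion $\Delta_{n-1} = k_{n+1}\Delta_n + \Delta_{n+1}$ to rewrite the ratio of consecutive values:
\begin{equation*}
\frac{\Delta_n \prod_{m=1}^{n+1}(k_m-2)}{\Delta_{n-1}\prod_{m=1}^{n}(k_m-2)} = \frac{k_{n+1}-2}{k_{n+1} + \Delta_{n+1}/\Delta_n}.
\end{equation*}
The standard bound $\Delta_{n+1}/\Delta_n < 1/k_{n+2}$ (itself a consequence of the same recursion) combined with $k_n \ge 5$ shows this ratio is at least $1 - C/k_{n+1}$ for a universal constant $C$. Taking logarithms and telescoping, $-\log(\Delta_n |N_n|)$ grows by at most a constant times $\sum_{m\le n+1} k_m^{-1}$, which is finite by the standing assumption $\sum k_n^{-1} < \infty$. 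Hence $\Delta_n |N_n|$ stays above a positive constant, which finishes the argument.

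Once Lemmas~\ref{lemma:sequence} and \ref{lemma:nabla_i^n} are in place, there is no real obstacle: the whole proof is a bookkeeping exercise supported by the telescoping estimate above. What is conceptually essential is the role of the summability hypothesis on the partial quotients---it is exactly what allows the infinite product $\prod_m (1 - C/k_m)$ to converge to a positive value, and without it (e.g.\ for rotation numbers of bounded type) the measure of $\Theta$ would collapse to zero, in agreement with the earlier singularity results cited in the introduction.
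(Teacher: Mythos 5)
Your proof is correct and follows essentially the same route as the paper: compute $\ell(\Theta_n)=d_n\,\#(N_n)$ via Lemma~\ref{lemma:nabla_i^n}, bound the ratio of consecutive measures below by $1-C/k_{n+1}$, and conclude by summability of $\sum k_n^{-1}$. The only (harmless) difference is that you pass to $\Delta_n$ via $d_n\ge\Delta_n/\alpha$ and use the $\Delta$-recursion, whereas the paper stays with the $d_n$-recursion directly through the bound $d_{n-1}<(k_{n+1}+1)d_n$.
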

\begin{proof}
It follows from Lemma~\ref{lemma:nabla_i^n} that $\ell(\Theta_n)=d_n\cdot\#(N_n)$, and by definition of $N_n$ we have $\#(N_n)=(k_{n+1}-2)\cdot\#(N_{n-1})$. Hence, $\ell(\Theta_n)=\frac{(k_{n+1}-2)d_n}{d_{n-1}}\ell(\Theta_{n-1})$. The relations (\ref{d_n}), (\ref{eq:bounds on d_n and delta_n}) and increasing of $k_n$ imply that $d_{n-1}<(k_{n+1}+1)d_n$ and therefore $\ell(\Theta_n)>\bigl(1-\frac{3}{k_{n+1}+1}\bigr)\ell(\Theta_{n-1})>\exp\bigl(-\frac{6}{k_{n+1}}\bigr)\ell(\Theta_{n-1})$ for large enough $n$. The statement follows from assumed convergence of the series $\sum_nk_n^{-1}$.
\end{proof}

\begin{proposition}\label{prop:f_n,eta,g_n,eta}
Let $\eta_0\in\Theta$. Then for every $n\ge0$, we have $\ell(\Delta_0^{(n)}(\eta_0))=d_n$,

$$%\begin{equation}
f_{2n,\eta_0}'(x)=\left\{\begin{array}{l}
a,\quad x\in(b_{2n},b_{2n}+\delta_{2n});\\
a^{-1},\quad x\in(b_{2n}+d_{2n}+a\delta_{2n+2},b_{2n}+d_{2n}+a\delta_{2n}+a\delta_{2n+2});\\
1\text{ elsewhere on }[-d_{2n-1},0],
\end{array}\right.
$$%\end{equation}

$$%\begin{equation}
g_{2n,\eta_0}'(x)=1\text{ everywhere on }[0,d_{2n}],
$$%\end{equation}

$$
g_{2n+1,\eta_0}'(x)=1\text{ everywhere on }[-d_{2n+1},0],
$$

$$%\begin{equation}
f_{2n+1,\eta_0}'(x)=\left\{\begin{array}{l}
a,\quad x\in(b_{2n+1},b_{2n+1}+\delta_{2n+2});\\
a^{-1},\quad x\in(b_{2n+1}+\delta_{2n},b_{2n+1}+\delta_{2n}+a\delta_{2n+2});\\
1\text{ elsewhere on }[0,d_{2n}],
\end{array}\right.
$$%\end{equation}

\noindent with certain values $b_{2n}\in(-d_{2n-1},f_{2n}^{k_{2n+1}-2}(-d_{2n-1})]$, $b_{2n+1}\in[f_{2n+1}^{k_{2n+2}-2}(d_{2n}),d_{2n})$.
\end{proposition}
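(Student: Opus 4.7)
The plan is to induct on $n$, following the same strategy as in the proof of Proposition~\ref{prop:g_n,f_n}, but with marked point $\eta_0 \in \Theta$ in place of the original break point. For the base case $n = 0$, the function $f_{0,\eta_0}$ is simply the lift of $T$ rewritten in coordinates centered at $\eta_0$, so its teeth sit at the four original break positions translated by $-\eta_0$. The hypothesis $\eta_0 \in \Theta_0$, i.~e.\ $\eta_0 \in \nabla_i^{(0)}$ for some $0 \le i \le k_1 - 3$, is precisely what forces those translated positions to fit inside $[-1,0]$ in the two-teeth configuration claimed, with $b_0$ in the required range; $g_{0,\eta_0} = \mathrm{Id} - 1$ is identity by definition.

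For the inductive step, I assume the claim for $n$ and apply the defining relations $g_{n+1,\eta_0} = f_{n,\eta_0}|_{\Delta_0^{(n+1)}(\eta_0)}$ and $f_{n+1,\eta_0} = f_{n,\eta_0}^{k_{n+1}} \circ g_{n,\eta_0}$. Since $g_{n,\eta_0}$ is identity by induction, the second relation reduces to $f_{n+1,\eta_0} = f_{n,\eta_0}^{k_{n+1}}$. The location bound on $b_n$ (a consequence of $\eta_0 \in \Theta_{n+1}$) keeps the sub-interval $\Delta_0^{(n+1)}(\eta_0)$ inside an identity region of $f_{n,\eta_0}$, and therefore $g_{n+1,\eta_0}$ is identity as well.

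The heart of the inductive step is then computing $f_{n,\eta_0}^{k_{n+1}}$: iterating $f_{n,\eta_0}$ on its domain, at all but one `crossing' iteration the image of the starting interval is translated by exactly $d_n$ within an identity region. At the unique crossing iteration, whose index is determined by where in $\Theta_{n+1}$ the point $\eta_0$ sits, the image straddles both teeth of $f_{n,\eta_0}$ at once; the slope-$a$ and slope-$a^{-1}$ teeth partially cancel, and the surviving portions reappear as the two teeth of $f_{n+1,\eta_0}$. Direct application of the recursions~(\ref{d_n}) and~(\ref{delta_n}), especially $\delta_{2n} = d_{2n+2} + a\delta_{2n+4}$, will pin down the surviving widths and positions to match those stated in the proposition. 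The new shift $b_{n+1}$ is the image of $b_n$ under the appropriate number of iterations of $f_{n,\eta_0}$, and the condition $\eta_0 \in \Theta_{n+2}$ places it in the range required for the next step.

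The main obstacle will be exactly this bookkeeping at the crossing iteration: one must verify that both teeth are indeed traversed in a single iteration (rather than being split across two), that the slope-$a$ and slope-$a^{-1}$ contributions cancel in the correct pattern, and that the surviving widths and positions match the proposition. This computation is essentially parallel to the case analysis already performed in the proof of Proposition~\ref{prop:g_n,f_n}, with the difference that now both teeth originate in $f_{n,\eta_0}$ rather than being split between $f_{n,\eta_0}$ and $g_{n,\eta_0}$. Once the single transition is verified, the induction propagates mechanically.
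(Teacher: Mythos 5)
Your overall strategy is the same as the paper's: run the renormalization induction of Proposition~\ref{prop:g_n,f_n} with $\eta_0$ as the marked point, and use the membership $\eta_0\in\Theta_n$ at each level to control where the preimage of the break point $\xi_0$ (i.e.\ the quantity $b_n$) can sit inside $\Delta_0^{(n-1)}(\eta_0)$. That is exactly the content of the paper's (very terse) proof, and your identification of the $b_n$-bounds as the place where $\Theta$ enters is the right key point. Two small imprecisions do not matter: $g_{n,\eta_0}$ is a rigid translation rather than the identity (irrelevant for derivatives), and $b_{n+1}$ arises as a \emph{preimage} of $b_n$ along the orbit rather than an image.

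There is, however, one step in your plan that would fail as stated: the claim that at a single ``crossing'' iteration the moving segment ``straddles both teeth of $f_{n,\eta_0}$ at once.'' This is geometrically impossible. The two teeth of $f_{2n,\eta_0}$ span the interval from $b_{2n}$ to $b_{2n}+d_{2n}+a\delta_{2n}+a\delta_{2n+2}$, of length strictly greater than $d_{2n}+(a-1)\delta_{2n}$, which is the maximal length of any segment $[f_{2n}^{j}(g_{2n}(0)),f_{2n}^{j+1}(g_{2n}(0))]$ in the orbit tiling; so no single such segment can contain both teeth. What actually happens (and what the paper's own computation in Proposition~\ref{prop:g_n,f_n}, case $m=1$, exhibits) is that the slope-$a$ tooth is met at some iteration $j^*$, stretching that segment by $(a-1)\delta_{2n}$, and the slope-$a^{-1}$ tooth is met at iteration $j^*+1$, shrinking it back; the cancellation is the \emph{cumulative} effect of $f_{n,\eta_0}^{2}$ on the segment containing $b_n$ (the paper makes this explicit in the proof of Lemma~\ref{lemma:Gamma}). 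This does not doom your argument --- the surviving widths and positions come out the same once the bookkeeping is spread over two consecutive iterations and relation (\ref{delta_n}) is applied --- but the verification target you set yourself (``both teeth traversed in a single iteration'') is false, and you would need to rewrite that part of the induction accordingly.
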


\begin{proof}
It is easy to follow the sequence of iterates like we did in the proof of Proposition~\ref{prop:g_n,f_n}, this time using $\eta_0$ as the marked point. What is important here are the bounds on values $(b_n)_{n\ge0}$ that follow directly from our construction of the set $\Theta$. Indeed, $b_n$ is nothing else but a coordinate of an appropriate preimage of the break point $\xi_0$ in vicinity of $\eta_0$. Since passing from $\Theta_{n-1}$ to $\Theta_n$ we have cut away from every $\nabla_i^{(n-1)}$, $i\in N_{n-1}$, three adjoint segments $\nabla_{i+q_{n-1}+(k_{n+1}-2)q_{n}}^{(n)}$, $\nabla_{i+q_{n-1}+(k_{n+1}-1)q_{n}}^{(n)}$, and $\nabla_i^{(n+1)}$, then $b_{2n}\not\in(f_{2n}^{k_{2n+1}-2}(-d_{2n-1}),0)$ and $b_{2n+1}\not\in(0,f_{2n+1}^{k_{2n+2}-2}(d_{2n}))$.
\end{proof}

\subsection{Differentiability}

In what follows, all the renormalization structures are counted from $\eta_0$ as the marked point, and we omit the letter $\eta_0$ by $f_n$, $g_n$, $\Pdyn_n$ and $\Delta_i^{(n)}$.

\begin{proposition}\label{prop:differentiability}
The conjugacy $\phi$ from (\ref{eq:conjugacy}) is differentiable at every point $\eta_0\in\Theta$.
\end{proposition}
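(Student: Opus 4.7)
The plan is to reduce differentiability at $\eta_0$ to the convergence
\[
\lim_{\eta\to\eta_0}\frac{\mu([\eta_0,\eta])}{|\eta-\eta_0|}=\alpha,
\]
where $\alpha>1$ is the constant from Lemma~\ref{lemma:sequence}. Since $|\phi(\eta)-\phi(\eta_0)|=\mu([\eta_0,\eta])$, this limit is precisely $\phi'(\eta_0)$. First, along the subsequence $\eta=\xi_{q_n}(\eta_0)$, Proposition~\ref{prop:f_n,eta,g_n,eta} gives $|\xi_{q_n}(\eta_0)-\eta_0|=d_n$, while $T$-invariance gives $\mu([\eta_0,\xi_{q_n}(\eta_0)])=\Delta_n$. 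The estimate $0\le\log(\alpha d_n/\Delta_n)<2(a-1)/k_{n+2}$ of Lemma~\ref{lemma:sequence} then yields $\Delta_n/d_n\to\alpha$.

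For general $\eta\to\eta_0$ on a fixed side, choose the largest $n$ of the corresponding parity with $|\eta-\eta_0|\le d_n$, so $\eta\in\Delta_0^{(n)}(\eta_0)\setminus\Delta_0^{(n+2)}(\eta_0)$. The refined tower structure decomposes this annulus into $k_{n+2}$ translates $T^{q_n+jq_{n+1}}\Delta_0^{(n+1)}(\eta_0)$ for $0\le j<k_{n+2}$, each carrying $\mu$-measure $\Delta_{n+1}$. Writing $[\eta_0,\eta]$ as $\Delta_0^{(n+2)}(\eta_0)$ together with $m\in\{0,\ldots,k_{n+2}-1\}$ whole such translates and a tail inside the $(m+1)$-th yields
\[
\mu([\eta_0,\eta])=\Delta_{n+2}+m\Delta_{n+1}+\mu(\mathrm{tail}),\qquad \ell([\eta_0,\eta])=d_{n+2}+\textstyle\sum_{j=1}^m\ell_j+\ell(\mathrm{tail}),
\]
where $\ell_j$ is the Lebesgue length of the $j$-th visited translate.

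Bounding $\ell_j$ is the heart of the matter. By Proposition~\ref{prop:f_n,eta,g_n,eta} the renormalization $g_{n+1,\eta_0}$ is an isometry (so the initial translate $\Delta_{q_n}^{(n+1)}(\eta_0)$ has length $d_{n+1}$), while $f_{n+1,\eta_0}$ has derivative~$1$ except on two small teeth of widths $\delta_{n+2}$ and $a\delta_{n+2}$ localized near the point $b_{n+1}$, whose contributions to $\int f'_{n+1,\eta_0}$ cancel. Iterating $f_{n+1,\eta_0}$ along the tower therefore preserves the length $d_{n+1}$ for every translate except at most a bounded number that straddle a tooth; for these the length differs from $d_{n+1}$ by at most $(a-1)\delta_{n+2}$, which by~\eqref{eq:bounds on d_n and delta_n} is $O(d_{n+1}/(k_{n+2}k_{n+3}))$. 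Combined with Lemma~\ref{lemma:sequence}'s approximation $\Delta_k=\alpha d_k(1+O(1/k_{k+2}))$ for $k\in\{n+1,n+2\}$, this yields $\mu([\eta_0,\eta])/\ell([\eta_0,\eta])=\alpha\bigl(1+O(1/k_{n+2})\bigr)$ whenever $m$ is an appreciable fraction of $k_{n+2}$.

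The main obstacle is the small-$m$ regime, in which the tail can be comparable to the principal terms. Here one iterates the same decomposition at level $n+2$ applied to the shorter interval $[\xi_{q_{n+2}}(\eta_0),\eta]$: peel off a new $\Delta_0^{(n+4)}(\eta_0)$-piece, a new tower of $k_{n+4}$ translates, and a smaller tail. Since each per-level distortion is $O(1/k_{n+2})$ and $\sum_n k_n^{-1}<+\infty$ by the hypotheses on $(k_n)$, the cumulative distortion after recursing through finitely many levels stays bounded and tends to $0$ as $n\to\infty$. One concludes that $\mu([\eta_0,\eta])/\ell([\eta_0,\eta])\to\alpha$ uniformly in the choice of side and of $\eta\to\eta_0$, giving $\phi'(\eta_0)=\alpha$.
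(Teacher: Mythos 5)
Your reduction to $\mu([\eta_0,\eta])/\ell([\eta_0,\eta])\to\alpha$ and your treatment of the regime where $m$ is an appreciable fraction of $k_{n+2}$ are sound and follow the same route as the paper (its Cases~1 and~2): the tower pieces carry $\mu$-measure exactly $\Delta_{n+1}$ and Lebesgue length $d_{n+1}$ up to $O(\delta)$ corrections, and $\Delta_k/d_k\to\alpha$ by Lemma~\ref{lemma:sequence}. The gap is in the small-$m$ regime, which is exactly where the difficulty of the proposition is concentrated (the paper's Case~3 together with the $s=3$ part of Lemma~\ref{lemma:Gamma}). The recursion you sketch there is not set up correctly: the tail left after removing $\Delta_0^{(n+2)}(\eta_0)$ and $m$ whole translates is a subinterval of some $\Delta_{q_n+jq_{n+1}}^{(n+1)}$ anchored at an orbit point and, after pulling back by the appropriate power of $T$, it is anchored at the \emph{far} endpoint $\xi_{q_{n+1}}$ of $\Delta_0^{(n+1)}$, not at $\eta_0$; there is no ``new $\Delta_0^{(n+4)}(\eta_0)$-piece'' to peel off, and the next tower has $k_{n+3}$, not $k_{n+4}$, members. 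More importantly, the assertion that ``each per-level distortion is $O(1/k)$'' is false for the objects your recursion actually produces: a short tail pulled back through a tooth of some $f_k$ has its length multiplied by a factor as large as $a$, and partition segments lying three or more levels below $n$ are only known to have length within a \emph{bounded} multiplicative factor of the corresponding $d$ (the two exceptional segments of $\Gamma_{n,3}$ in the paper can be stretched by about $(a-1)\delta_n\asymp(a-1)d_{n+2}$, a non-vanishing relative amount). So the cumulative error of an unbounded recursion does not tend to zero merely because $\sum_nk_n^{-1}<+\infty$.

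The paper avoids this by stopping after three levels, proving in Lemma~\ref{lemma:Gamma} that the particular $\Gamma_{n,3}$ segments arising in Case~3 (those inside $\Delta_{q_{n+1}-q_{n-1}}^{(n)}$) miss the exceptional stretched segments --- this is where the bounds on $b_n$ from Proposition~\ref{prop:f_n,eta,g_n,eta}, i.e.\ the actual content of the hypothesis $\eta_0\in\Theta$, are used --- and disposing of the single deepest remainder with the crude bound $a^2d_{n+3}$, which is already negligible relative to the main term divided by $k_{n+2}$. Your argument never invokes the location $b_n$ of the teeth, which is the tell-tale sign that the essential point has been bypassed. (A minor indexing slip besides: for $n$ odd the teeth of $f_{n+1,\eta_0}$ have widths $\delta_{n+1}$ and $a\delta_{n+1}$, not $\delta_{n+2}$; this one is harmless, since both are $o(d_{n+1})$.)
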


First we need to prove the following lemma about the lengths of segments from dynamical partitions of several deeper levels lying within the fundamental segments. Denote by $\Gamma_{n,s}$ the set of all segments $\Delta_i^{(n+s-1)}\in\Pdyn_{n+s}$, which lie within the segment $\Delta_0^{(n-1)}$.

\begin{lemma}\label{lemma:Gamma}
$\frac{\ell(\Delta_i^{(n+s-1)})}{d_{n+s-1}}\to 1$ as $n\to+\infty$ uniformly in $i$ for all $\Delta_i^{(n+s-1)}\in\Gamma_{n,s}$, $0\le s\le2$, and for all those $\Delta_i^{(n+2)}\in\Gamma_{n,3}$, which lie within $\Delta_{q_{n+1}-q_{n-1}}^{(n)}$.
\end{lemma}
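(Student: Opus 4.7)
The strategy rests on the explicit form of the pre-renormalizations at $\eta_0\in\Theta$ given by Proposition~\ref{prop:f_n,eta,g_n,eta}: at every level $m$ the map $g_{m,\eta_0}$ is a pure translation, and $f_{m,\eta_0}$ coincides with a translation except on two narrow teeth of total width at most $(1+a)\max\{\delta_m,\delta_{m+2}\}$. Each element $\Delta_i^{(n+s-1)}\in\Gamma_{n,s}$ can be written as the image of the fundamental segment $\Delta_0^{(n+s-1)}(\eta_0)$, whose length equals $d_{n+s-1}$, under a composition of iterates of $f_m$ and $g_m$ for $n\le m\le n+s-1$. Its length therefore differs from $d_{n+s-1}$ only by the distortion accumulated whenever an intermediate segment straddles one of these teeth, and each such straddle shifts the length by at most $(a-1)\delta_m$. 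Since $\delta_m/d_m<2/(k_{m+1}k_{m+2})\to 0$ by~\eqref{eq:bounds on d_n and delta_n}, once the total number of tooth-straddles is shown to be bounded by a universal constant, I will obtain $\ell(\Delta_i^{(n+s-1)})/d_{n+s-1}=1+O(1/(k_{n+1}k_{n+2}))\to 1$ uniformly in $i$.

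The case $s=0$ is immediate since $\Gamma_{n,0}=\{\Delta_0^{(n-1)}\}$. For $s=1$ I will identify the elements of $\Gamma_{n,1}$ with the $k_{n+1}$ consecutive iterates $f_n^j(g_n(\Delta_0^{(n)}))$, $0\le j\le k_{n+1}-1$, starting from a segment of length $d_n$. The two teeth of $f_n$ together span a region of length at most $d_n+O(\delta_n)$ and hence can overlap at most two of these $k_{n+1}$ iterates; every other iterate is a pure translation preserving length $d_n$. This yields $\ell(\Delta_i^{(n)})=d_n+O(\delta_n)$ uniformly.

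For $s=2$ and $s=3$ I will descend one, respectively two, further renormalization levels. Each element of $\Gamma_{n,s}$ arises by first subdividing $\Delta_0^{(n+s-2)}$ into its level-$(n+s-1)$ pieces through iterates of $f_{n+s-1}$, and then transporting that subdivision through at most $k_{n+s-2}-1$ iterates of $f_{n+s-2}$, and so on down to $f_n$. Thanks to the positional constraints on $b_m$ from Proposition~\ref{prop:f_n,eta,g_n,eta}, which confine the teeth of each $f_m$ to the immediate vicinity of the boundary of its domain, only $O(1)$ iterates at each level can straddle a tooth. The reason $s=3$ demands the restriction to segments lying inside $\Delta_{q_{n+1}-q_{n-1}}^{(n)}$ is that only this particular level-$n$ sub-segment is positioned so that the deeper subdivision by $f_{n+2}$ intersects the teeth of $f_{n+2}$ at most a bounded number of times, whereas in a generic level-$n$ sub-segment of $\Delta_0^{(n-1)}$ the resulting straddle count at level $n+2$ could scale with $k_{n+3}$ and destroy the uniformity.

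The principal technical difficulty is combinatorial bookkeeping: pinpointing, for each $\Delta_i^{(n+s-1)}\in\Gamma_{n,s}$, the exact composition of iterates producing it, and verifying via the bounds on $b_m$ in Proposition~\ref{prop:f_n,eta,g_n,eta} that the number of tooth-straddles at every level remains universally bounded. Once that bookkeeping is in place, assembling the final uniform estimate is a direct application of~\eqref{eq:bounds on d_n and delta_n} together with $k_n\to\infty$.
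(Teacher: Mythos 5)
Your framework (tracking each $\Delta_i^{(n+s-1)}$ as an image of $\Delta_0^{(n+s-1)}$ under compositions of $f_m^j\circ g_m$ and counting encounters with the teeth of non-unit derivative) is the same as the paper's, and it does carry the cases $s\le 2$: there the per-straddle deviation $(a-1)\delta_n$ is compared against $d_{n+s-1}$ with $s-1\le 1$, and $\delta_n/d_n$ and $\delta_n/d_{n+1}$ both tend to zero. But your central premise --- that a universally bounded number of straddles, each costing $O(\delta_m)$, yields a relative error $1+o(1)$ --- fails precisely at $s=3$. There the reference length is $d_{n+2}$, while by \eqref{delta_n} one has $\delta_{2n}=d_{2n+2}+a\delta_{2n+4}$, so $\delta_{2n}/d_{2n+2}\to 1$: a single encounter with the upper tooth of $f_n$ ($n$ even), of width $\delta_n$, changes the length by $(a-1)\delta_n$, which is \emph{comparable to} $d_{n+2}$, not small relative to it. A bounded straddle count therefore gives only $\ell(\Delta_i^{(n+2)})/d_{n+2}=1+O(1)$, and the uniform convergence to $1$ does not follow.

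The missing idea is a cancellation, not bookkeeping. The paper observes that for $n$ even the two teeth of $f_n$ are positioned (the lower tooth sitting essentially on the $f_n$-image of the upper tooth, offset by $a\delta_{n+2}$) so that a segment stretched by the upper tooth is shrunk back by the lower tooth on the \emph{next} application of $f_n$; the cumulative deviation of $f_n^2$ across both teeth is only $(a-1)\delta_{n+2}=o(d_{n+2})$. The only segments escaping this cancellation are the two that have been stretched but not yet shrunk (their preimages meet the upper tooth while they themselves sit on the lower tooth), and Proposition~\ref{prop:f_n,eta,g_n,eta} places these outside $\Delta_{q_{n+1}-q_{n-1}}^{(n)}=[f_n^{k_{n+1}-1}(-d_{n-1}),0]$ --- which is the actual reason for the restriction in the statement. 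Your explanation of that restriction (a straddle count scaling with $k_{n+3}$ for generic level-$n$ subsegments) is not correct: each individual segment meets each tooth a bounded number of times regardless of where it sits; what varies is whether the stretch has been compensated yet. Without the cancellation argument and the identification of the two exceptional segments, the $s=3$ part of the lemma --- the part the restriction exists for --- is not established.
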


\begin{proof}
In this proof, for simplicity we identify a set $M$ on the circle in vicinity of $\eta_0$ with its projection $\pi_{\eta_0}M$ to the real line, where $f_n$ and $g_n$ operate.

For $s=0$ the statement is trivial, since $\Gamma_{n,0}=\{\Delta_0^{(n-1)}\}$, and $\ell(\Delta_0^{(n-1)})=d_{n-1}$ by Proposition~\ref{prop:f_n,eta,g_n,eta}.

It is easy to see that $\Gamma_{n,s}=\Gamma_{n+2,s-2}\cup\bigcup_{j=0}^{k_{n+1}-1}f_n^j\circ g_n(\Gamma_{n+1,s-1})$. We will use this recurrent relation in order to estimate the lengths of $\Delta_i^{(n+s-1)}\in\Gamma_{n,s}$ for $s\ge1$. As seen from Proposition~\ref{prop:f_n,eta,g_n,eta}, $g_n$ does not distort length of any segment, while $f_n$ may once stretch and once shrink it, with the maximal possible change in length equal to $(a-1)\delta_{n}$ for $n$ even and $(a-1)\delta_{n+1}$ for $n$ odd.

For $s=1$, the segments in $\Gamma_{n,1}$ are the images of $\Delta_0^{(n)}$ under application of $g_n$ (once) and then $f_n$ (from~0 to $k_{n+1}-1$ times). Using Proposition~\ref{prop:f_n,eta,g_n,eta}, one can easily check that all of them have length $d_n$ for $n$ odd, while for $n$ even all but one have length $d_{n}$ with that exceptional one of length $d_{n}+(a-1)\delta_{n}$.

For $s=2$, the segments in $\Gamma_{n,2}\backslash\Gamma_{n+2,0}$ are the images of the segments in $\Gamma_{n+1,1}$ under $f_n^j\circ g_n$, $0\le j<k_{n+1}$. Proposition~\ref{prop:f_n,eta,g_n,eta} allows one to calculate that their lengths are either $d_{n+1}$ or $d_{n+1}+(a-1)\delta_{n}$ for $n$ even, and either $d_{n+1}$ or $d_{n+1}+(a-1)\delta_{n+1}$ for $n$ odd.

For $s=3$, the segments in $\Gamma_{n,3}\backslash\Gamma_{n+2,1}$ are the images of the segments in $\Gamma_{n+1,2}$ under $f_n^j\circ g_n$, $0\le j<k_{n+1}$. If $n$ is odd, then
their lengths differ from $d_{n+2}$ by not more than $2(a-1)\delta_{n+1}$. If $n$ is even, then this difference formally can reach $(a-1)\delta_{n+2}+(a-1)\delta_{n}$, but let us look closer at this case. Notice that for $n$ even the teeth of non-unit derivative in $f_{n}$ are positioned in such a way that the maximal possible change in length of a segment under their {\em cumulative} action (i.e. the action of $f_{n}^2$ on $\Delta_i^{(n)}\in\Pdyn_{n+1}$ such that $b_{n}\in\Delta_i^{(n)}$) is $(a-1)\delta_{n+2}$ rather than $(a-1)\delta_{n}$. In fact, there are exactly two segments in $\Gamma_{n,3}$ that are possibly stretched by more than $2(a-1)\delta_{n+2}$: those whose preimages under $f_n$ intersect the upper tooth $(b_{2n},b_{2n}+\delta_{2n})$ (while they themselves intersect the lower tooth and are being shrank back on the next application of $f_n$). It also follows from Proposition~\ref{prop:f_n,eta,g_n,eta} that these two exceptional segments do not intersect $[f_{n}^{k_{n+1}-1}(-d_{n-1}),0]=\Delta_{q_{n+1}-q_{n-1}}^{(n)}$.

Since $\delta_n=o(d_{n+1})$ (in fact $\delta_{2n}/d_{2n+2}\to1$ as $n\to+\infty$), the estimates we proved imply the statement of the lemma.
\end{proof}

\begin{proof}[Proof of Proposition~\ref{prop:differentiability}]
We will show directly that $\frac{\ell([\phi(\eta),\phi(\eta_0)])}{\ell([\eta,\eta_0])}\to\alpha$ as $\eta\to\eta_0$, and therefore $\phi'(\eta_0)=\alpha$ for every $\eta_0\in\Theta$ (here and in the sequel, by $[\xi,\zeta]$ we denote the smallest of the two arcs $[\xi,\zeta]$ and $[\zeta,\xi]$ on $\unitcircle$).

As $\eta$ tends to $\eta_0$ from the left (from the right), it passes through the sequence of segments $\dot\Delta_0^{(n-1)}=\Delta_0^{(n-1)}\backslash\Delta_0^{(n+1)}\cup\{\eta_{q_{n+1}}\}$ with $n$ even (odd), which in turn consist of segments from the dynamical partitions of the next levels. According to the decomposition $\dot\Delta_0^{(n-1)}=\bigcup_{j_0=0}^{k_{n+1}-1}(\Delta_{q_{n-1}+(k_{n+1}-1-j_0)q_n}^{(n+2)}\cup\bigcup_{j_1=0}^{k_{n+2}-1}\Delta_{q_{n-1}+(k_{n+1}-j_0)q_n+j_1q_{n+1}}^{(n+1)})$, consider three cases.

\underline {Case~1:} $\eta\in\Delta_{q_{n-1}+(k_{n+1}-j_0)q_n+j_1q_{n+1}}^{(n+1)}$, $1\le j_0\le k_{n+1}-1$ (we have excluded $j_0=0$, which will be Case~3 below), $0\le j_1\le k_{n+2}-1$. On one hand, the segment $[\eta,\eta_0]$ contains $j_0$ segments from $\Gamma_{n,1}$ (namely $\Delta_{q_{n-1}+iq_n}^{(n)}$, $k_{n+1}-j_0\le i\le k_{n+1}-1$) and $j_1+1$ segments from $\Gamma_{n,2}$ (namely $\Delta_{q_{n-1}+(k_{n+1}-j_0)q_n+iq_{n+1}}^{(n+1)}$, $0\le i<j_1$, and $\Delta_0^{(n+1)}$). On another hand, it is contained in the union of $j_0$ segments from $\Gamma_{n,1}$ (the same as previously) and $j_1+2$ segments from $\Gamma_{n,2}$ (with $\Delta_{q_{n-1}+(k_{n+1}-j_0)q_n+j_1q_{n+1}}^{(n+1)}$ added). Due to Lemma~\ref{lemma:Gamma} and the limit $d_n/\Delta_n\to1/\alpha$ implied by Lemma~\ref{lemma:sequence}, the following estimate holds with some $\eps_n>0$, $\eps_n\to0$ as $n\to+\infty$:
$$
e^{-\eps_n}\alpha^{-1}(j_0\Delta_{n}+(j_1+1)\Delta_{n+1})\le\ell([\eta,\eta_0])\le\alpha^{-1}(j_0\Delta_{n}+(j_1+2)\Delta_{n+1})e^{\eps_n}.
$$
Since $\ell(\phi(\Delta_i^{(n)}))=\Delta_n$ for every $i,n\ge0$, we similarly obtain that
$$
j_0\Delta_{n}+(j_1+1)\Delta_{n+1}\le\ell([\phi(\eta),\phi(\eta_0)])\le j_0\Delta_{n}+(j_1+2)\Delta_{n+1}.
$$
Taking the ratio of the two latter estimates, we find it out that
\begin{multline*}
\left|\log\frac{\ell([\phi(\eta),\phi(\eta_0)])}{\alpha\ell([\eta,\eta_0])}\right|\le\eps_n+\log\frac{j_0\Delta_{n}+(j_1+2)\Delta_{n+1}}{j_0\Delta_{n}+(j_1+1)\Delta_{n+1}}\\
\le\eps_n+\frac{\Delta_{n+1}}{j_0\Delta_{n}+(j_1+1)\Delta_{n+1}}\le\eps_n+\frac{\Delta_{n+1}}{\Delta_{n}}\le\eps_n+\frac{1}{k_{n+2}}\to0,\ n\to+\infty.
\end{multline*}

\underline {Case~2:} $\eta\in\Delta_{q_{n-1}+(k_{n+1}-1-j_0)q_n}^{(n+2)}$, $0\le j_0\le k_{n+1}-1$. By similar consideration we get the estimates
\begin{gather*}
e^{-\eps_n}\alpha^{-1}(j_0\Delta_{n}+(k_{n+2}+1)\Delta_{n+1})\le\ell([\eta,\eta_0])\le\alpha^{-1}((j_0+1)\Delta_{n}+\Delta_{n+1})e^{\eps_n},\\
j_0\Delta_{n}+(k_{n+2}+1)\Delta_{n+1}\le\ell([\phi(\eta),\phi(\eta_0)])\le (j_0+1)\Delta_{n}+\Delta_{n+1},
\end{gather*}
with $\eps_n>0$, $\eps_n\to0$ as $n\to+\infty$, which imply
$$
\left|\log\frac{\ell([\phi(\eta),\phi(\eta_0)])}{\alpha\ell([\eta,\eta_0])}\right|\le\eps_n+\frac{\Delta_{n}-k_{n+2}\Delta_{n+1}}{j_0\Delta_{n}+(k_{n+2}+1)\Delta_{n+1}}\le
\eps_n+\frac{\Delta_{n+2}}{\Delta_{n}}\le\eps_n+\frac{1}{k_{n+2}k_{n+3}},
$$
with the same conclusion.

\underline {Case 3:} $\eta\in\Delta_{q_{n-1}+k_{n+1}q_n+j_1q_{n+1}}^{(n+1)}=\Delta_{(j_1+1)q_{n+1}}^{(n+1)}$, $0\le j_1\le k_{n+2}-1$. The segment $[\eta,\eta_0]$ contains $j_1+1$ segments from $\Gamma_{n,2}$ (namely $\Delta_{iq_{n+1}}^{(n+1)}$, $0\le i\le j_1$,) and some $0\le j_2\le k_{n+3}-1$ segments from $\Gamma_{n,3}$ (namely $\Delta_{(j_1+2)q_{n+1}+iq_{n+2}}^{(n+2)}$, $k_{n+3}-j_2\le i<k_{n+3}$). On the other hand, it is contained within the union of $j_1+1$ segments from $\Gamma_{n,2}$ (the same as previously), $j_2+1$ segments from $\Gamma_{n,3}$ (with added $\Delta_{(j_1+2)q_{n+1}+(k_{n+3}-j_2-1)q_{n+2}}^{(n+2)}$), and $\Delta_{(j_1+1)q_{n+1}}^{(n+3)}$ from $\Gamma_{n,4}$. Notice, that all the segments from $\Gamma_{n,3}$ mentioned here do lie within $\Delta_{q_{n+1}-q_{n-1}}^{(n)}$, so that the uniform limit in Lemma~\ref{lemma:Gamma} holds for them. For the tiny segment $\Delta_{(j_1+1)q_{n+1}}^{(n+3)}$ it will suffice to use a rough estimate $\ell(\Delta_{(j_1+1)q_{n+1}}^{(n+3)})\le a^2d_{n+3}$ that follows from the fact that it is an image of $\Delta_0^{(n+3)}$ under $f_n^{k_{n+1}-1}\circ g_n\circ f_{n+1}^{j_1}\circ g_{n+1}$, where only one step of applying $f_{n+1}$ and only one step of applying $f_n$ can stretch the length (and with the factor of not more than $a$) according to Proposition~\ref{prop:f_n,eta,g_n,eta}. We reach the estimates
\begin{multline*}
e^{-\eps_n}\alpha^{-1}((j_1+1)\Delta_{n+1}+j_2\Delta_{n+2})\le\ell([\eta,\eta_0])\\
\le\alpha^{-1}((j_1+1)\Delta_{n+1}+(j_2+1)\Delta_{n+2}+a^2\Delta_{n+3})e^{\eps_n}\\
\le\alpha^{-1}((j_1+1)\Delta_{n+1}+(j_2+1)\Delta_{n+2})(1+a^2/k_{n+2})e^{\eps_n},
\end{multline*}
with $\eps_n>0$, $\eps_n\to0$ as $n\to+\infty$, and
\begin{multline*}
(j_1+1)\Delta_{n+1}+j_2\Delta_{n+2}\le\ell([\phi(\eta),\phi(\eta_0)])\\
\le (j_1+1)\Delta_{n+1}+(j_2+1)\Delta_{n+2}+\Delta_{n+3},\\
\le ((j_1+1)\Delta_{n+1}+(j_2+1)\Delta_{n+2})(1+1/k_{n+2}).
\end{multline*}
Finally, we obtain
$$
\left|\log\frac{\ell([\phi(\eta),\phi(\eta_0)])}{\alpha\ell([\eta,\eta_0])}\right|\le
\eps_n+\frac{a^2}{k_{n+2}}+\frac{\Delta_{n+2}}{(j_1+1)\Delta_{n+1}+j_2\Delta_{n+2}}\le\eps_n+\frac{a^2}{k_{n+2}}+\frac{1}{k_{n+3}},
$$
which tends to zero as well.
\end{proof}

\subsection{Conclusions}

\begin{proof}[Proof of Theorem~\ref{theorem:main}] The theorem contains three statements, which we will prove now.

1. The rotation number of $T$ is $\rho$ by Proposition~\ref{prop:g_n,f_n}.

2. Four break points lie on pairwise distinct trajectories by Proposition~\ref{prop:f_n,eta,g_n,eta}. Indeed, by the construction, the break points of $f_n$ are nothing else but the projections of first preimages of break points of $T$ within the segment $\Delta_0^{(n-1)}$. If two of the latter points lied on the same trajectory, then for large enough $n$ their first preimages within $\Delta_0^{(n-1)}$ would coincide, which contradicts to the fact that $f_n$ has four distinct break point for any $n$.

3. For a circle diffeomorphism with breaks, its invariant measure $\mu$ can be either absolutely continuous, or singular w.~r.~t.\ $\ell$ (see, for ex., \cite{KH-book}, where this result is proved for pure diffeomorphisms, and is easily extended to those with breaks). If $\mu$ is singular, then the conjugacy $\phi$ is a singular function, and $\phi'=0$ almost everywhere~$\ell$, which contradicts to Propositions~\ref{prop:Theta} and~\ref{prop:differentiability}. Therefore, $\mu$ is absolutely continuous.
\end{proof}

\begin{remark}
The linearizing change of variables $\phi$ in (\ref{eq:conjugacy}) is absolutely continuous and therefore possesses a density $h\in L_1(\unitcircle)$. This density is positive a.~e.~$\ell$ due to the fact that $T$ is ergodic w.~r.~t.~$\ell$ (see \cite{KH-book}).
\end{remark}

\begin{remark}
The conjugacy $\phi$ is {\em essentially} absolutely continuous, i.~e.\ it is not piecewise $C^1$, moreover, there is an everywhere dense subset of $\unitcircle$, over which $\phi$ is not differentiable.
\end{remark}

\begin{remark}
An irrational number $\rho$ is said to belong to Diophantine class $D_\delta$ if there exists a constant $C>0$
such that $|\rho-p/q|\ge Cq^{-2-\delta}$ for any rational number $p/q$. The set of Roth numbers is $R=\cap_{\delta>0}D_\delta$. The numbers from $D_0$ are those whose continued fraction expansion is formed by a bounded sequence $(k_n)_n$, therefore they are called `numbers of bounded type'. The restrictions we have imposed on $(k_n)_n$ allow one to choose $\rho\in R$ (for ex., $k_n=(n+5)^2$ will work), but not $\rho\in D_0$.
\end{remark}


\begin{thebibliography}{99}

\bibitem{KH-book} A.~Katok, B.~Hasselblatt.
Introduction to the Modern Theory of Dynamical Systems.
Cambridge University Press, Cambridge, 1997.

\bibitem{Herman} M.-R.~Herman.
Sur la conjugaison diff\'{e}rentiable des diff\'{e}omorphismes du cercle \`{a} des rotations.
{\em I.\ H.\ E.\ S.\ Publ.\ Math.} 49 (1979), 5--233.

\bibitem{KO-etds1989} Y.~Katznelson and D.~Ornstein.
The differentiability of the conjugation of certain diffeomorphisms of the circle.
{\em Ergod. Th.\ Dynam.\ Sys.} 9 (1989), 643--680.

\bibitem{KT-invent2009} K.~Khanin, A.~Teplinsky.
Herman's Theory Revisited.
{\em Invent.\ Math.} 178 (2009), 333--344.

\bibitem{DK-fa1998} A.~A.~Dzhalilov, K.~M.~Khanin.
On an invariant measure for homeomorphisms of a circle with a point of break.
{\em Funct.\ Anal.\ Appl.} 32 (1998), 153–-161.

\bibitem{DMS-iran2012} A.~A.~Dzhalilov, D.~Mayer, U.~A.~Safarov.
Piecewise-smooth circle homeomorphisms with several break points.
{\em Izv.\ Math.} 76 (2012), 94--112.

\bibitem{DL-2006} A.~Dzhalilov, I.~Liousse.
Circle homeomorphisms with two break points
{\em Nonlinearity.}\/ 19 (2006), 1951--1968.

\bibitem{AM-etds2014} A.~Adouani, H.~Marzougui.
Singular measures for class P-circle homeomorphisms with several break points.
{\em Ergod.\ Th.\ Dynam.\ Sys.} 34 (2014) 423--456.

\bibitem{KT-cmp2013} K.~Khanin, A.~Teplinsky.
Renormalization horseshoe and rigidity for circle diffeomorphisms with breaks.
{\em Comm.\ Math.\ Phys.} 320 (2013), 347--377.


\bibitem{CS-II} K.~Cunha, D.~Smania.
Rigidity for piecewise smooth homeomorphisms on the circle.
{\em Adv.\ Math.} 250 (2014), 193--226.

\bibitem{A-manuscript} A.~Adouani.
Conjugation between circle maps with severeal break points.
(Acepted to {\em Ergod.\ Th.\ Dynam.\ Sys.})

\bibitem{DMS-arxiv} A.~Dzhalilov, D.~Mayer, U.~Safarov
Conjugacies between P-homeomorphisms with several breaks.
arXiv:1408.5732 [math.DS].

%\bibitem{Herman} M.-R.~Herman.
%Sur la conjugaison diff\'{e}rentiable des diff\'{e}omorphismes du cercle \`{a} des rotations.
%{\em I.\ H.\ E.\ S.\ Publ.\ Math.}, {\bf 49}: 5--233, 1979.
%
%\bibitem{KO-etds1989} Y.~Katznelson and D.~Ornstein. The differentiability of the conjugation of certain diffeomorphisms
%    of the circle, {\em Ergodic Theory Dynam. Systems}, {\bf 9}(4): 643--680, 1989.
%
%\bibitem{KT-invent2009} K.~Khanin, A.~Teplinsky. Herman's Theory Revisited.
%{\em Invent.\ Math.} {\bf 178} (2): 333--344, 2009.
%
%\bibitem{DK-fa1998} A.~A.~Dzhalilov, K.~M.~Khanin.
%On an invariant measure for homeomorphisms of a circle with a point of break.
%{\em Funct.\ Anal.\ Appl.}, {\bf 32} (3): 153–-161, 1998.
%
%\bibitem{DMS-iran2012} A.~A.~Dzhalilov, D.~Mayer, U.~A.~Safarov.
%Piecewise-smooth circle homeomorphisms with several break points.
%{\em Izv.\ Math.}, {\bf 76} (1): 94--112, 2012.
%
%\bibitem{AM-etds2014} A.~Adouani, H.~Marzougui.
%Singular measures for class P-circle homeomorphisms with several break points.
%{\em Ergod.\ Th.\ Dynam.\ Sys.}, {\bf 34} (2): 423--456, 2014.
%
%
%\bibitem{KT-cmp2013} K.~Khanin, A.~Teplinsky. Renormalization horseshoe and rigidity
%for circle diffeomorphisms with breaks. {\em Comm.\ Math.\ Phys.}, {\bf 320} (2): 347--377, 2013.
%
%
%\bibitem{CS-II} K.~Cunha, D.~Smania.
%Rigidity for piecewise smooth homeomorphisms on the circle.
%{\em Adv.\ Math.} {\bf 250}: 193--226, 2014.

\end{thebibliography}
\end{document}